\theoremstyle{plain}
\newtheorem{theorem}{Theorem}[section]
\newtheorem{lemma}[theorem]{Lemma}
\newtheorem{corollary}[theorem]{Corollary}
\newtheorem{proposition}[theorem]{Proposition}
\theoremstyle{plain}
\newtheorem*{theorem*}{Theorem}
\theoremstyle{remark}
\newtheorem*{remark}{Remark}
\newtheorem*{problem}{Problem}
\newcommand{\dC}{\mathbb C}                             
\newcommand{\dQ}{\mathbb Q}                             
\newcommand{\dN}{\mathbb N}                             
\newcommand{\dZ}{\mathbb Z}                             
\newcommand{\ra}{\rightarrow}                           
\newcommand{\abs}[1]{\left| #1 \right|}                 
\newcommand{\p}[1]{\left( #1 \right)}                   
\DeclareMathOperator{\dens}{\mathrm{dens}}              
\DeclareMathOperator{\e}{\mathrm{e}}                    
\DeclareMathOperator{\res}{\mathrm{res}}                
\begin{document}
\title[The sum of digits of $n$ and $n+t$]{On a conjecture of Cusick concerning the sum of digits of $n$ and $n+t$}

\author[M. Drmota]{Michael Drmota}
\address{Institute for Discrete Mathematics and Geometry,
Vienna University of Technology,
Wiedner Hauptstrasse 8--10, 1040 Vienna, Austria}
\email{michael.drmota@tuwien.ac.at}

\author[M. Kauers]{Manuel Kauers}
\address{Institute for Algebra,
Johannes Kepler University Linz, Austria}
\email{manuel.kauers@jku.at}

\author[L. Spiegelhofer]{Lukas Spiegelhofer}
\address{Institute for Discrete Mathematics and Geometry,
Vienna University of Technology,
Wiedner Hauptstrasse 8--10, 1040 Vienna, Austria}
\email{lukas.spiegelhofer@tuwien.ac.at}

\thanks{The first and second author acknowledge the support of the Austrian Science Fund (FWF) grant~F50.
The third author acknowledge support by the FWF, grant~F5502-N15, which is a part of the Special Research Program ``Quasi Monte Carlo Methods: Theory and Applications'', and Project~I1751, called MUDERA (Multiplicativity, Determinism, and Randomness).}
\keywords{sum of digits, number of carries, binomial coefficients modulo powers of primes,
hyperbinary expansions, diagonals of generating functions}
\subjclass[2010]{Primary, 11A63, 05A20; Secondary, 05A16, 11B50, 11B65}

\maketitle
\begin{abstract}
For a nonnegative integer $t$, let $c_t$ be the asymptotic density of natural numbers $n$ for which $s(n+t)\geq s(n)$, where $s(n)$ denotes the sum of digits of $n$ in base~$2$.
We prove that $c_t>1/2$ for $t$ in a set of asymptotic density~$1$, thus giving a partial solution to a conjecture of T.~W. Cusick stating that $c_t > 1/2$ for all~$t$.
Interestingly, this problem has several equivalent formulations, for example that the polynomial $X(X+1)\cdots (X+t-1)$ has less than $2^t$ zeros modulo $2^{t+1}$.
The proof of the main result is based on Chebyshev's inequality and the asymptotic analysis of a trivariate rational function using methods from analytic combinatorics.
\end{abstract}
\section{Introduction}
Let $s(n)$ denote the binary sum-of-digits function of a nonnegative integer $n$, that is, the number of times the digit~$1$ occurs in the binary expansion of $n$.
Since $s(n)$ is increasing in the average, it is natural to expect that $s(n+t) \geq s(n)$ is a rather probable event.
More precisely it was asked by T.~W. Cusick (personal communication, 2012) whether the asymptotic densities
\begin{equation*}
c_t=\dens\{n\geq 0:s(n+t)\geq s(n)\}
\end{equation*}
satisfy, for all integers $t\geq 0$,
\begin{equation}\label{eqn:cusick_question}
c_t>1/2.
\end{equation}
Here and in what follows, $\dens A$ denotes the asymptotic density of a set $A\subseteq \dN$.
It will become clear later, see equation~\eqref{eqn:pochhammer_divisibility}, that the density exists in our case.
Actually this question arose while Cusick was working
on a similar
combinatorial problem proposed by Tu and Deng~\cite{TD2011} related to Boolean functions with desirable cryptographic properties, and the results of his work on this problem at that time have been published in~\cite{CLS2011}.

Concerning his question, Cusick ``acquired more confidence in it over time'' and consequently ``would now refer to the question as a conjecture'' (personal communication, September~23, 2015).
Although it is quite easy to compute $c_t$ for every fixed $t$ (see Section~\ref{sec:auxiliary}), the full statement could not be tackled so far.
Our numerical experiments show that~\eqref{eqn:cusick_question} holds at least for all $t<2^{30}$, which is a quite good support for Cusick's conjecture.
Moreover, by the same method we computed
\begin{equation*}
\tilde c_t=\dens\{n\geq 0:s(n+t) > s(n)\}
\end{equation*}
for $t<2^{30}$, and the result of this computation suggests that
\begin{equation}\label{eqn:cusick_complementary}
\tilde c_t\leq 1/2
\end{equation}
should hold for all $t$, which increases the significance of the original question.

The main result of this paper is the following asymptotic statement, which gives a positive answer to Cusick's conjecture for almost all integers but also shows that the bound $1/2$ is tight.
Moreover, this theorem gives analogous results concerning $\tilde c_t$.
\begin{theorem}\label{thm:main} For any $\varepsilon>0$ we have
\[  \abs{\{ t\leq T : 1/2 - \varepsilon < \tilde c_t < 1/2 < c_t < 1/2 + \varepsilon\}} = T + O\left( \frac T{\log T} \right)  \]
as $T\to\infty$. In particular, $\tilde c_t < 1/2 < c_t$ holds for $t$ in a subset of $\dN$ of asymptotic density $1$.
\end{theorem}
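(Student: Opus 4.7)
I would reduce both densities to a single carry-count statistic and then apply a local central limit theorem obtained from a rational generating function. Let $c(n,t)$ denote the number of carries produced when $n$ and $t$ are added in binary; the standard identity $s(n+t)=s(n)+s(t)-c(n,t)$ rewrites $s(n+t)\geq s(n)$ as $c(n,t)\leq s(t)$ and $s(n+t)>s(n)$ as $c(n,t)\leq s(t)-1$. Writing $\Pr$ for the asymptotic density in $n$ and $C_t$ for the random variable $c(\cdot,t)$, one has $c_t=\Pr(C_t\leq s(t))$ and $\tilde c_t=\Pr(C_t\leq s(t)-1)$. A telescoping argument showing that $\sum_{n<N}(s(n+t)-s(n))=O(t\log N)$ gives the mean $E(C_t)=s(t)$, so the theorem reduces to locating the median of $C_t$ relative to its mean, and quantitatively to establishing concentration around it.

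Next I would read off the distribution of $C_t$ bitwise from a two-state finite transducer that processes the binary digits of $n$ and $t$ from low to high, carrying the current carry as its state. Assembling this transducer produces a trivariate rational generating function $F(x,y,z)$ encoding the joint distribution of $t$, $s(t)$, and $C_t$. From $F$ I would extract both a variance bound $\mathrm{Var}(C_t)=\Theta(s(t))$ and a uniform local-limit estimate $\Pr(C_t=k)=O(1/\sqrt{s(t)})$. Combining these with Chebyshev's inequality yields $\bigl|c_t-\tfrac12\bigr|,\bigl|\tilde c_t-\tfrac12\bigr|=O(1/\sqrt{s(t)})$, while the strict inequalities $\tilde c_t<\tfrac12<c_t$ would follow from the right-skewness of $C_t$: carries can cascade past the highest bit of $t$, forcing $\Pr(C_t<s(t))>\Pr(C_t>s(t))$ and pulling the median strictly below the mean. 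Exceptional $t$ such as powers of $2$, where this strict inequality degenerates, are swept into the error term.

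The quantitative error $O(T/\log T)$ would then follow from the standard binomial tail estimate $|\{t\leq T:s(t)\leq\log\log T\}|=O(T/\log T)$, since for $s(t)\geq\log\log T$ the estimates above are already $o(1)$. The hard part is the singularity analysis of $F(x,y,z)$: because $C_t$ depends on the full bit pattern of $t$ rather than merely on $s(t)$, the local-limit estimate has to be obtained by a multivariate saddle-point or diagonal-of-rational-function analysis that is uniform over a suitably large family of $t$, and it is precisely this uniformity that governs the $O(T/\log T)$ error term in the theorem.
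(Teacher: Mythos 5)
Your reduction to the carry count $C_t$ via $s(n+t)=s(n)+s(t)-c(n,t)$ is correct and indeed appears in the paper (Section~3). The identity $\mathbb{E}_n[C_t]=s(t)$ also holds, and the target $c_t=\Pr(C_t\leq s(t))$, $\tilde c_t=\Pr(C_t\leq s(t)-1)$ is the right object. However, there is a decisive gap at the crux of your argument, namely where you derive the \emph{strict} inequalities $\tilde c_t<\tfrac12<c_t$ from an asserted ``right-skewness'' of $C_t$. You are claiming, for every individual $t$ with $s(t)$ large, that $\Pr(C_t<s(t))>\Pr(C_t>s(t))$; this is not a consequence of a variance bound, a local limit theorem, or a Berry--Esseen-type estimate, all of which control $|c_t-\tfrac12|$ up to an error comparable in size to the quantity you are trying to sign. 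If this pointwise skewness could be established for all $t$ with, say, $s(t)\geq\log\log T$, you would have essentially proved Cusick's full conjecture up to a sparse exceptional set --- but that is precisely what the paper states it cannot do, and what makes the problem hard. The intuition that ``carries can cascade'' does not obviously translate into the needed inequality; the distribution of $C_t$ depends delicately on the bit pattern of $t$, and the paper's extensive numerics ($t<2^{30}$) were needed even to \emph{check} $c_t>1/2$, let alone prove it.

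The paper's route is structurally different and more modest: rather than trying to pin down the sign of $c_t-\tfrac12$ for each $t$, it treats $t\mapsto c_t$ as a random variable $X_\lambda$ on the dyadic interval $[2^\lambda,2^{\lambda+1})$ and applies Chebyshev \emph{in $t$}. The key computations are that $\mathbb{E}[X_\lambda]-\tfrac12\sim\tfrac1{2\sqrt{\pi\lambda}}$ (Proposition~\ref{thm:ct_mean_value}, elementary) and that $\operatorname{Var}(X_\lambda)\sim\tfrac{43}{144\pi}\lambda^{-2}$ (Corollary~\ref{cor:43}); the second moment is extracted as a diagonal coefficient of the trivariate rational function $A(x,y,z)/((1-y)(1-z))$ by a saddle-point argument. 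Since the standard deviation $\Theta(\lambda^{-1})$ is of strictly smaller order than the gap $\Theta(\lambda^{-1/2})$ between the mean and $1/2$, Chebyshev gives $c_t>1/2$ for all but $O(2^\lambda/\lambda)$ values of $t$ in the interval. Your trivariate generating function (encoding the joint law of $t$, $s(t)$, $C_t$) is not the same object as the paper's, which encodes the correlation $\sum_t\delta(k,t)\delta(\ell,t)$ needed for the second moment; these should not be conflated. In summary: your plan correctly identifies the reformulation, the mean, and the role of a multivariate generating function and a diagonal/saddle-point analysis, but the logical skeleton --- pointwise skewness of $C_t$ --- is unsubstantiated and, as far as is known, out of reach; the averaging-over-$t$ with a second-moment computation is what actually closes the argument.
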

The proof is based on an appropriate averaging argument.
More precisely we study the distribution of $c_t$ and $\tilde c_t$ for $2^\lambda \leq t < 2^{\lambda+1}$ and show, using Chebyshev's inequality, that the values of $c_t$ (resp. $\tilde c_t$) concentrate well above (resp. below) $1/2$.
While the average value is relatively easy to handle, the computation of the variance relies on the asymptotic analysis of {\it diagonals} of a trivariate generating function, which is the most difficult step of the proof.

However, while this theorem shows that there exist many increasing sequences of integers $(t_j)_{t\geq 0}$ such that $c_{t_j}>1/2$, it does not give any concrete example of such a sequence.
Of course, by the relation $c_{2t}=c_t$ the sequence $t=(2^j)_j$ has this property, but this is admittedly not an interesting example.

We exhibit a more interesting sequence with this property.
As it turns out, the sequence $t$ we are going to define even has the property that $c_{t_j}\ra 1/2$ from above, and we give a more precise asymptotic estimate of these values.
\begin{theorem}\label{thm:concrete}
Let $j\geq 0$ and $t_j=(4^j-1)/3$ (which has the binary representation $t_j=\p{(10)^{j-1}1}_2$ for $j\geq 1$). Then
\[
c_{t_j}=\frac 12+\frac{\sqrt{3}}{4\sqrt{2\pi j} }+O\p{j^{-3/2}}
.
\]
Moreover, $c_{t_j} > \frac 12$ holds for all $j\ge 1$.
\end{theorem}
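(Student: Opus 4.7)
The plan is to reformulate $c_{t_j}$ as a probability about the number of carries in the binary addition of $n$ and $t_j$, analyse this carry count via a $2\times 2$ transfer matrix exploiting the periodic digit pattern of $t_j$, and conclude by a local central limit theorem together with a continuity-correction argument.

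From the identity $s(n+t)=s(n)+s(t)-c(n,t)$, where $c(n,t)$ counts the carries in binary addition, and the fact $s(t_j)=j$, the condition $s(n+t_j)\ge s(n)$ becomes $c(n,t_j)\le j$, hence $c_{t_j}=\Pr(C_j\le j)$ with $C_j:=c(n,t_j)$, the probability being taken with respect to a uniformly random $2$-adic integer $n$. Since the binary expansion of $t_j$ is $(10)^{j-1}1$, the per-position carry transition alternates between an ``OR''-rule at the $j$ positions where $t_j=1$ and an ``AND''-rule at the other $j-1$ positions, followed by an all-AND tail. Marking each carry by the variable $y$, the corresponding $2\times 2$ matrices are
\[
A_{OR}(y)=\left(\begin{smallmatrix}1/2&y/2\\0&y\end{smallmatrix}\right),\qquad
A_{AND}(y)=\left(\begin{smallmatrix}1&0\\1/2&y/2\end{smallmatrix}\right),
\]
and collecting the trailing AND-transitions into the geometric tail factor $1/(2-y)$ gives
\[
F_j(y):=\sum_{k\ge 0}\Pr(C_j=k)\,y^k=(1,0)\bigl(A_{OR}(y)A_{AND}(y)\bigr)^{j-1}A_{OR}(y)\binom{1}{1/(2-y)}.
\]

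Set $B(y)=A_{OR}(y)A_{AND}(y)$; its characteristic polynomial $\lambda^{2}-(\tfrac12+\tfrac y4+\tfrac{y^{2}}{2})\lambda+\tfrac{y^{2}}{4}$ has eigenvalues $1$ and $\tfrac14$ at $y=1$, so the spectral gap persists in a neighbourhood of $y=1$. Let $\lambda(y)$ denote the analytic branch through $\lambda(1)=1$; implicit differentiation of the characteristic polynomial yields $\lambda'(1)=1$, $\lambda''(1)=\tfrac43$, and a further differentiation gives $\lambda'''(1)=0$. The quasi-power theorem of Hwang then identifies a Gaussian limit law for $C_j$ with mean $j$ and variance $\sigma^{2}j+O(1)$, where $\sigma^{2}=\lambda''(1)+\lambda'(1)-\lambda'(1)^{2}=\tfrac43$. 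The equality $E[C_j]=j$ in fact holds exactly, as a standard consequence of the shift-invariance of the uniform measure on the $2$-adic integers (i.e.\ $E[c(n,t)]=s(t)$ for every $t$), and the vanishing of $\lambda'''(1)$ keeps the third cumulant of $C_j$ bounded as $j\to\infty$.

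To extract the $1/\sqrt j$-term of $c_{t_j}$ we upgrade this Gaussian limit to a local one by a saddle-point analysis of $F_j(y)$ on the unit circle, where $\theta=0$ is the unique maximiser of $|\lambda(e^{i\theta})|$; this yields
\[
\Pr(C_j=j)=\frac{1}{\sqrt{2\pi\sigma^{2}j}}+O(j^{-3/2})=\frac{\sqrt{3}}{2\sqrt{2\pi j}}+O(j^{-3/2}).
\]
Because $E[C_j]=j$ exactly and the third cumulant remains bounded, the continuity-corrected Gaussian approximation
\[
\Pr(C_j\le j)=\Phi\!\left(\frac{1}{2\sigma\sqrt j}\right)+O(j^{-3/2})
\]
is valid, and expanding $\Phi(z)=\tfrac12+z/\sqrt{2\pi}+O(z^{3})$ at $z=1/(2\sigma\sqrt j)=O(j^{-1/2})$ produces
\[
c_{t_j}=\frac12+\frac{\sqrt{3}}{4\sqrt{2\pi j}}+O(j^{-3/2}).
\]
The strict inequality $c_{t_j}>1/2$ then holds for all sufficiently large $j$ by the asymptotic, and for the finitely many remaining small indices it is verified directly from the explicit rational function $F_j(y)$ (for example $c_{t_1}=3/4$ and $c_{t_2}=5/8$). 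The main obstacle is the sharp local limit theorem: one needs to control $\Pr(C_j=j)$ with error $O(j^{-3/2})$, which requires a genuine saddle-point evaluation of $F_j$ (using $\lambda'''(1)=0$ to eliminate the leading Edgeworth correction) rather than a soft appeal to distributional convergence.
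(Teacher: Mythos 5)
Your proof takes a genuinely different route from the paper. Both compute the same underlying generating function $\sum_{k} y^k\,\delta(j-k,t_j)$ (your $F_j(y)$ is exactly $[x^j]$ of the bivariate series the paper calls $A(x,y)$), but the two proofs diverge completely in how the asymptotics are then extracted. The paper solves a system of recurrences to obtain an explicit rational $\tilde A(x,y)$, shows via Furstenberg and creative telescoping that the diagonal $H(z)=\sum_j c_{t_j}z^j$ is \emph{algebraic}, writes $H$ explicitly as a polynomial $+$ a square root, and applies Flajolet--Odlyzko singularity analysis to the dominant singularity at $z=1$; the coefficient $\sqrt{3}/(4\sqrt{2\pi})$ appears as the constant in front of the $(1-z)^{-1/2}$ piece. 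You instead reformulate $c_{t_j}$ as $\Pr(C_j\le j)$ for the carry count $C_j$, build a $2\times 2$ transfer matrix reflecting the alternating $10$-pattern of $t_j$, and apply a quasi-power/local CLT argument: your $A_{OR}$, $A_{AND}$, the characteristic polynomial $\lambda^2-(\tfrac12+\tfrac y4+\tfrac{y^2}2)\lambda+\tfrac{y^2}4$, the values $\lambda'(1)=1$, $\lambda''(1)=4/3$, $\lambda'''(1)=0$, $\sigma^2=4/3$, and the exact identity $\mathbb{E}[C_j]=j$ are all correct, and they do yield the stated constant $\tfrac1{2\sigma\sqrt{2\pi}}=\tfrac{\sqrt3}{4\sqrt{2\pi}}$.

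Your approach is more conceptual and probabilistic; it explains \emph{why} the constant is what it is (a variance $4/3$ in the carry process, plus a continuity correction because the mean sits exactly on a lattice point and the third cumulant is $O(1)$). The paper's approach is more mechanical but buys two things: first, the algebraic closed form of $H(z)$ makes the singularity analysis elementary and its error bounds completely effective, which is precisely what the paper uses to close the strict inequality $c_{t_j}>1/2$ for the finitely many remaining small $j$; second, it sidesteps the delicate lattice issues you have to face. This is the one genuine soft spot in your argument: the step $\Pr(C_j\le j)=\Phi\bigl(\tfrac1{2\sigma\sqrt j}\bigr)+O(j^{-3/2})$ does not follow from the quasi-power theorem alone. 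For an integer lattice variable, the Esseen-type expansion of the CDF contains a sawtooth term of order $j^{-1/2}$ in addition to the Edgeworth polynomial corrections; one needs to show that after the half-integer continuity correction this sawtooth term cancels exactly, that the $\kappa_3$-term is $O(j^{-3/2})$ (which you justify, correctly, via $\lambda'''(1)=0$), and that the $\kappa_4$-term, although of size $O(1/j)$ uniformly, contributes only $O(j^{-3/2})$ at the point $x=O(j^{-1/2})$ because the relevant Hermite polynomial is odd. You acknowledge that a genuine saddle-point evaluation is required but defer it; the paper's route avoids all of this by transferring the algebraic singularity directly. So your plan is correct in spirit and would produce a valid proof, but the final local-limit/Edgeworth step still needs to be carried out carefully rather than cited.

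Minor remark: the paper's displayed computation only exhibits an $O(j^{-1})$ error bound in the singularity analysis step, even though the theorem claims $O(j^{-3/2})$; the sharper bound does follow from the fact that the function near $z=1$ is of the form $\tfrac{1/2}{1-z}+\tfrac{c_1/16}{\sqrt{1-z}}+\text{analytic}+O(\sqrt{1-z})$, so your $O(j^{-3/2})$ target is the right one.
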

In the proof of this statement we will again make use of a diagonal of a multivariate generating function, but in this case two variables suffice and extracting the asymptotics is much easier than in the proof of Theorem~\ref{thm:main}.
\section{An auxiliary lemma}\label{sec:auxiliary}
The following lemma is an extension of the ``Lemma of B\'esineau'' \cite[Lemme~1]{B1972} (note that we only handle the sum-of-digits function $s$ in base $2$, although an analogous statement holds for larger bases).
It establishes the fundamental two-dimensional recurrence relation that we will use throughout this paper.
\begin{lemma}\label{lem:bes_lemme1}
Let $t\geq 0$ be an integer.
There exists a partition $\mathcal N_t$ of the set of nonnegative integers having the properties that
\begin{enumerate}[(i)]
\item Each class $N\in\mathcal N_t$ is a residue class modulo $2^r$ for some $r\ge 0$, that is, it is of the form $a+2^r\mathbb N$, where $0\leq a<2^r$.
\item For all integers $k$ the set
\[
B(k,t)=\{n\in \mathbb N:s(n+t)-s(n)=k\}
\]
is a finite (possibly empty) union of classes from the partition~$\mathcal N_t$.
\end{enumerate}
In particular, each of the sets $B(k,t)$ possesses an asymptotic density $\delta(k,t)$.
Moreover, for all $k\in\dZ$ and $t\geq 1$ the densities satisfy the following recurrence relation:
\begin{equation}\label{eqn:density_recurrence}
\begin{aligned}
\delta(k,1)&=\begin{cases}2^{k-2},&k\leq 1,\\0&\mbox{otherwise,}\end{cases}
\\
\delta(k,2t)&=\delta(k,t),\\
\delta(k,2t+1)&=\frac 12 \delta(k-1,t)+\frac 12\delta(k+1,t+1).
\end{aligned}
\end{equation}
\end{lemma}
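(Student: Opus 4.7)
The plan is to prove all three assertions of the lemma simultaneously by strong induction on $t\geq 1$, constructing the partition $\mathcal{N}_t$ by splitting classes of $\mathcal{N}_{\lfloor t/2\rfloor}$ (and, when $t$ is odd, also of $\mathcal{N}_{(t+1)/2}$) according to the parity of their elements. The density recurrence then drops out from the observation that splitting a residue class $a+2^r\dN$ into its even part $2a+2^{r+1}\dN$ and odd part $2a+1+2^{r+1}\dN$ halves the density.

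For the base case $t=1$, I would argue directly that if $n$ ends in exactly $r$ binary ones, then $s(n+1)-s(n)=1-r$. Consequently, for every $k\leq 1$, the set $B(k,1)$ is the single residue class $\{n:n\equiv 2^{1-k}-1\pmod{2^{2-k}}\}$, having density $2^{k-2}$, while $B(k,1)=\emptyset$ for $k\geq 2$. Declaring $\mathcal{N}_1$ to be the collection of these classes immediately gives properties (i), (ii), and the value of $\delta(k,1)$.

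For the inductive step, the key ingredients are the trivial identities $s(2m)=s(m)$ and $s(2m+1)=s(m)+1$, combined with the two-fold rewriting $2m+2t'=2(m+t')$ and $(2m+1)+(2t'+1)=2(m+t'+1)$. These yield
\begin{align*}
s(2m+2t')-s(2m)&=s(m+t')-s(m),\\
s((2m+1)+2t')-s(2m+1)&=s(m+t')-s(m),\\
s(2m+(2t'+1))-s(2m)&=s(m+t')-s(m)+1,\\
s((2m+1)+(2t'+1))-s(2m+1)&=s(m+(t'+1))-s(m)-1.
\end{align*}
When $t=2t'$, the first two identities show that $n\in B(k,2t')$ iff $\lfloor n/2\rfloor\in B(k,t')$, so refining each class of $\mathcal{N}_{t'}$ by parity produces $\mathcal{N}_{2t'}$ and yields $\delta(k,2t')=\delta(k,t')$. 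When $t=2t'+1$, the last two identities show that $n\in B(k,2t'+1)$ iff either $n=2m$ with $m\in B(k-1,t')$ or $n=2m+1$ with $m\in B(k+1,t'+1)$; assembling the ``even halves'' of $\mathcal{N}_{t'}$ with the ``odd halves'' of $\mathcal{N}_{t'+1}$ produces $\mathcal{N}_{2t'+1}$. Since $t',t'+1<2t'+1$ for $t'\geq 1$, the inductive hypothesis applies to both, and summing the halved densities gives $\delta(k,2t'+1)=\tfrac{1}{2}\delta(k-1,t')+\tfrac{1}{2}\delta(k+1,t'+1)$.

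The only real obstacle is bookkeeping rather than analysis: one must verify that the two lifting operations $a+2^r\dN\mapsto 2a+2^{r+1}\dN$ and $a+2^r\dN\mapsto 2a+1+2^{r+1}\dN$ produce a genuine partition of $\dN$ at each step (disjointness is immediate from parity), that each resulting class is still a residue class modulo a power of $2$ (so property~(i) persists), and that $B(k,t)$ remains a \emph{finite} union of such classes (which follows since at level $t=2t'+1$ the sets $B(k-1,t')$ and $B(k+1,t'+1)$ are finite unions by induction, and each contributes the same number of lifted classes to $B(k,2t'+1)$).
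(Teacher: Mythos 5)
Your proof is correct and takes essentially the same approach as the paper: both start from the identity $s(2m+j)=s(m)+j$, derive the same four recurrences for $s(n+t)-s(n)$ under the substitutions $n\mapsto 2n, 2n+1$ and $t\mapsto 2t', 2t'+1$, handle the base case $t=1$ by counting trailing binary ones, and then induct on $t$ by lifting the residue classes of $\mathcal N_{\lfloor t/2\rfloor}$ (and $\mathcal N_{(t+1)/2}$ when $t$ is odd) to their even and odd halves modulo one higher power of two. The bookkeeping you flag at the end is exactly what the paper also relies on (disjointness of the even/odd split and finiteness persisting under the halving), so there is no gap.
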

\begin{proof}
We set $d(n,t)=s(n+t)-s(n)$.
For all $n, t\geq 0$ we have
\begin{equation*}
\begin{aligned}
d(2n,2t)&=d(n,t),\\
d(2n+1,2t)&=d(n,t),\\
d(2n,2t+1)&=d(n,t)+1,\\
d(2n+1,2t+1)&=d(n,t+1)-1,
\end{aligned}
\end{equation*}
which follows easily from the elementary property
$s(2m+j)=s(m)+j$ for $j\in \{0,1\}$.
Moreover, we have
\begin{equation}\label{eqn:first_column}
d(n,1)=s(i+1)-s(i)=1-\nu_2(i+1)
,
\end{equation}
which follows by writing $i=2^{\ell+1}a+2^\ell-1$ with $a,\ell\geq 0$ and noting that $\ell=\nu_2(i+1)$.
(We write $\nu_p(n)$ to denote the exponent of the prime $p$ in the prime factorization of $n$.)
We prove the statements by induction on $t$.
In the case $t=1$ equation~\eqref{eqn:first_column} implies
\[
B(1-\ell,t)=\begin{cases}
-1+2^\ell+2^{\ell+1}\mathbb N,&\ell\geq 0,\\
\emptyset&\mbox{otherwise,}\end{cases}
\]
since the set of nonnegative $n$ exactly divisible by $2^\ell$ equals $2^\ell+2^{\ell+1}\dN$.
This implies the first line of~\eqref{eqn:density_recurrence}.
Let $t>1$ be even, $t=2u$, and $k\in\dZ$.
Then
\begin{equation}\label{eqn_even_induction_step}
\begin{split}
B(k,2u)
&=
\{n:d(n,2u)=k\}
\\&=
2\{n:d(2n,2u)=k\} \cup (2\{n:d(2n+1,2u)=k\}+1)
\\&=
2\{n:d(n,u)=k\} \cup (2\{n:d(n,u)=k\}+1),
\end{split}
\end{equation}
which is by the induction hypothesis a finite union of arithmetic progressions of the form $a+2^r\dN$.
If $t$ is odd, $t=2u+1$, we get by analogous reasoning
\begin{equation}\label{eqn_odd_induction_step}
\begin{split}
B(k,2u+1)
&=\{n:d(n,2u+1)=k\}
\\&=
2\{n:d(n,u)=k-1\}\cup(2\{n:d(n,u+1)=k+1\}+1).
\end{split}
\end{equation}
The unions in~\eqref{eqn_even_induction_step} and~\eqref{eqn_odd_induction_step} respectively are disjoint, therefore the statement on the densities follows.
This finishes the proof.
\end{proof}

The recurrence relation for the densities $\delta(k,t)$ allows us to compute these densities for any given value of~$t$.
In Table~\ref{tbl:values} we list some values of the double family $\delta$, omitting zeros for more clarity. (The rows are indexed by $k$ and the columns by $t$.)
\def\extrarowheight{2ex}             
\begin{table}
\[
\begin{array}{r|cccccccccccccccc}
  &           1&            2&           3&           4&           5&            6&             7&           8&            9&          10&            11&           12&            13&            14&            15\\
\hline
 4&            &            &             &            &            &            &               &            &             &            &              &             &             &               &  \frac{1}{16}\\
 3&            &            &             &            &            &             &   \frac{1}{8}&            &             &            &   \frac{1}{8}&             &   \frac{1}{8}&   \frac{1}{8}&   \frac{1}{32}\\
 2&            &            &  \frac{1}{4}&            & \frac{1}{4}&  \frac{1}{4}&  \frac{1}{16}&            &  \frac{1}{4}& \frac{1}{4}&   \frac{1}{8}&  \frac{1}{4}&   \frac{1}{8}&  \frac{1}{16}&   \frac{5}{64}\\
 1& \frac{1}{2}& \frac{1}{2}&  \frac{1}{8}& \frac{1}{2}& \frac{1}{4}&  \frac{1}{8}&  \frac{5}{32}& \frac{1}{2}&  \frac{1}{4}& \frac{1}{4}&  \frac{3}{16}&  \frac{1}{8}&  \frac{3}{16}&  \frac{5}{32}& \frac{21}{128}\\
 0& \frac{1}{4}& \frac{1}{4}& \frac{5}{16}& \frac{1}{4}& \frac{1}{8}& \frac{5}{16}& \frac{21}{64}& \frac{1}{4}& \frac{3}{16}& \frac{1}{8}&  \frac{5}{32}& \frac{5}{16}&  \frac{5}{32}& \frac{21}{64}& \frac{85}{256}\\
-1& \frac{1}{8}& \frac{1}{8}& \frac{5}{32}& \frac{1}{8}&\frac{3}{16}& \frac{5}{32}&\frac{21}{128}& \frac{1}{8}& \frac{3}{32}&\frac{3}{16}& \frac{13}{64}& \frac{5}{32}& \frac{13}{64}&\frac{21}{128}& \frac{85}{512}\\
-2&\frac{1}{16}&\frac{1}{16}& \frac{5}{64}&\frac{1}{16}&\frac{3}{32}& \frac{5}{64}&\frac{21}{256}&\frac{1}{16}& \frac{7}{64}&\frac{3}{32}&\frac{13}{128}& \frac{5}{64}&\frac{13}{128}&\frac{21}{256}&\frac{85}{1024}\\
-3&\frac{1}{32}&\frac{1}{32}&\frac{5}{128}&\frac{1}{32}&\frac{3}{64}&\frac{5}{128}&\frac{21}{512}&\frac{1}{32}&\frac{7}{128}&\frac{3}{64}&\frac{13}{256}&\frac{5}{128}&\frac{13}{256}&\frac{21}{512}&\frac{85}{2048}
\end{array}
\]
\vskip 1em
\caption{The array $\delta$.} \label{tbl:values}
\end{table}
\def\extrarowheight{0ex}
By induction, using Lemma~\ref{lem:bes_lemme1}, or by taking a close look at Table~\ref{tbl:values}, we obtain
\begin{equation}\label{eqn:delta_finite}
\delta(k,t)=0\quad\mbox{for}\quad k>s(t).
\end{equation}
(Alternatively, we can also use equation~\eqref{eqn:binom_div} from below, which implies this statement in the form $s(n+t)-s(n)\leq s(t)$.)
Therefore $c_t$ is a finite sum of values $\delta(k,t)$:
if $2^\lambda \leq t < 2^{\lambda+1}$, we have
\[  c_t=\sum_{k=0}^{\lambda+1}\delta(k,t).  \]
The first few values of $c_t$ are therefore $1,\frac 34,\frac 34,\frac {11}{16},\frac 34,\frac 58,\frac {11}{16},\frac{43}{64},\frac 34,\frac {11}{16},\frac 58,\frac{19}{32},\frac {11}{16},\frac{19}{32},$ all of which are clearly greater than $1/2$.
As already mentioned, a numerical experiment conducted by the authors, using the two-dimensional recurrence relation, reveals that in fact $c_t>1/2$ for all $t<2^{30}$.
(Note that in order to compute the $t$-th column of $\delta$, where $t=(\varepsilon_\nu,\ldots,\varepsilon_0)_2$, we only have to keep track of two adjacent columns with indices $(\varepsilon_\nu,\ldots,\varepsilon_{\nu-k})_2$ and $(\varepsilon_\nu,\ldots,\varepsilon_{\nu-k})_2+1$ as $k$ runs from $0$ to $\nu$.
Moreover, only odd $t$ have to be taken into account.
This can be implemented in a quite efficient way, and the calculation only took a couple of hours on a standard machine.)
The minimal value of $c_t$ for $t$ in this range is attained at the integer $t=(111101111011110111101111011111)_2$ and at the integer $t^R$ obtained by reversing the base-$2$ representation of $t$.
(In fact, $\delta(k,t)=\delta(k,t^R)$ holds for all $t\geq 1$ and $k\in\dZ$, see the article~\cite{MS2012} by Morgenbesser and the third author, which as of 2015 seems to be the only published result on the values $c_t$.)
The value of $c_t$ at these positions equals $18169025645289/2^{45}=0.516394\ldots$.
Moreover, as we noted in the introduction, the values
\[  \tilde c_t=\dens\{n:s(n+t)>s(n)\} = \sum_{k=1}^{\lambda+1}\delta(k,t),  \]
which only differ by $\delta(0,t)$ from $c_t$, seem to satisfy $\tilde c_t\leq 1/2$ for all $t\geq 0$.
\section{Equivalent formulations}
There are several equivalent formulations of Cusick's problem.
In this section we present some of them.
\subsection{Rising factorials}
Summing~\eqref{eqn:first_column} from $i=n$ to $n+t-1$ yields
\begin{equation}\label{eqn:pochhammer_divisibility}
s(n+t)-s(n)=t-\nu_2\left((n+1)_t\right),
\end{equation}
where $(x)_t = x(x+1)\cdots (x+t-1)$ denotes the Pochhammer symbol (or ``rising factorial'').\footnote{We note that~\eqref{eqn:pochhammer_divisibility} is essentially the special case $p=2$ of the formula $\nu_p(t!)=(n-s_p(t))/(p-1)$ due to Legendre, involving the sum of digits in prime base $p$.}
It follows that $s(n+t)\geq s(n)$ if and only if $2^{t+1}\nmid(n+1)_t$.
Since the latter condition is periodic in $n$ with period $2^{t+1}$, the existence of the limit in the definition of $c_t$ follows immediately.
Writing
\[a_{\lambda,t}=\frac 1{2^\lambda}\abs{\left\{n<2^\lambda:2^\lambda\nmid (n+1)_t\right\}},\]
property~\eqref{eqn:cusick_question} is equivalent to $a_{t+1,t}> 1/2$, that is, to
\[
\abs{\left\{n<2^{t+1}:2^{t+1}\nmid (n+1)_t\right\}}> 2^t.
\]
This reformulation obviously asks for generalizations---we therefore pose the following informal problem, however we do not follow this path in the present article.
\begin{problem}
Find classes of polynomials $f\in\dZ[X]$ of degree $t$ such that
\begin{equation}\label{eqn:polynomial_generalization}
\abs{\{n<2^{t+1}:f(n)\equiv 0\bmod 2^{t+1}\}}<2^t
.
\end{equation}
\end{problem}
Cusick's question is an instance of this problem, taking the polynomials $(X+1)(X+2)\cdots (X+t)\in\dZ[X]$, which should then have less than $2^t$ zeros in $\dZ/2^{t+1}\dZ$.

On the other hand, property~\eqref{eqn:cusick_complementary}, if true, would imply that~\eqref{eqn:polynomial_generalization} fails for the polynomial $f(X)=(X+1)\cdots(X+t+1)$ of degree $t+1$, that is, this polynomial would have at least $2^t$ zeros in the ring $\dZ/2^{t+1}\dZ$.
\subsection{Columns in Pascal's triangle}
Combining~\eqref{eqn:pochhammer_divisibility} with the special case $s(t)=t-\nu_2(t!)$, we obtain the identity
\begin{equation}\label{eqn:binom_div}
s(n+t)-s(n)=s(t)-\nu_2\binom{n+t}{t}
.
\end{equation}
Therefore we get a reformulation of~\eqref{eqn:cusick_question} as a problem on columns in Pascal's triangle, namely that 
\begin{equation}\label{eqn:ct_binomial}
\dens\left\{n:2^{s(t)+1}\nmid\binom{n+t}t\right\} > 1/2.
\end{equation}
As before, the condition defining the set on the left hand side is periodic with period $2^{t+1}$;
we will see later that the smallest period is in fact much smaller.

Of course also the property~\eqref{eqn:cusick_complementary}, which is complementary to~\eqref{eqn:cusick_question}, translates to a statement concerning Pascal's triangle---it is equivalent to the relation
\[  \dens\left\{n:2^{s(t)}\nmid\binom{n+t}t\right\}\leq 1/2  \]
analogous to~\eqref{eqn:ct_binomial}.
Therefore, assuming that $\tilde c_t\leq 1/2<c_t$, the integer $s(t)$ is the largest exponent $k$ such that at least
half of the entries in the $t$-th column of Pascal's triangle are divisible by $2^k$.

Finally, we note that $\nu_2\binom{n+t}t$ equals the number of {\it carries} that occur when adding $t$ to $n$ in base $2$ (see Kummer~\cite{K1852}).
\subsection{Rows in Pascal's triangle}
Questions on {\it rows} in Pascal's triangle modulo powers of primes have received some attention in the literature. We refer to Barat and Grabner~\cite{BG2001} and Rowland~\cite{R2011} and the references contained in these articles.
In the article~\cite{BG2001} the numbers
\[\vartheta_j(t)=\abs{\left\{n\leq t:p^j\Vert\binom tn\right\}}\]
are studied (where $p^j\Vert k$ means $\nu_p(k)=j$), while the article~\cite{R2011} works with the closely related expression
\[a_m(t)=\abs{\left\{n\leq t:m\nmid\binom tn\right\}},\]
where $m$ is a power of a prime.
In both articles the corresponding integers are expressed in terms of polynomials involving block digital functions.
In~\cite{R2011} explicit expressions for some prime powers are computed.
For example, we have
\begin{align*}
a_{2^1}(t)&=2^{\abs{t}_1},\\
a_{2^2}(t)/a_{2^1}(t)&=1+\frac 12\abs{t}_{10},
\\
a_{2^3}(t)/a_{2^1}(t)&=
1+\frac 38\abs{t}_{10}
+\abs{t}_{100}
+\frac 14\abs{t}_{110}
+\frac 18\abs{t}_{10}^2,
\\
a_{2^4}(t)/a_{2^1}(t)&=
1
+\frac 5{12}\abs{t}_{10}
+\frac 12\abs{t}_{100}
+\frac 18\abs{t}_{110}
+2\abs{t}_{1000}
+\frac 12\abs{t}_{1010}
+\frac 12\abs{t}_{1100}
\\
&+
\frac 18\abs{t}_{1110}
+\frac 1{16}\abs{t}_{10}^2
+\frac 12\abs{t}_{10}\abs{t}_{100}
+\frac 18\abs{t}_{10}\abs{t}_{110}
+\frac 1{48}\abs{t}_{10}^3
.
\end{align*}
In these formulas, $\abs{t}_{w_{\nu-1}\ldots w_0}$ is the number of times the finite word $w\in\{0,1\}^\nu$ occurs as a subword in the binary representation of $t$.
(Note that $\abs{t}_1=s(t)$.)

The formulas for $a_{2^\alpha}(t)$ above,
and also the case $\alpha=5$,
had already been known before,
see Glaisher~\cite{G1899} ($\alpha=1$),
Carlitz~\cite{C1967} ($\alpha=2$)
and Howard~\cite{H1971} ($\alpha=3,4,5$).
However, Rowland's method allows (with increasing computational effort) to find an analogous expression for each modulus $p^\alpha$ with prime $p$ and $\alpha\geq 0$.
Rowland also implemented this method in a Mathematica package called {\sc BinomialCoefficients}, available from his website.
Moreover, he proved~\cite{R2011} the following theorem, which is also contained implicitly in the older article~\cite{BG2001}.
\begin{theorem*}[Rowland; Barat--Grabner]
Let $p$ be a prime and $\alpha\geq 1$.
Then $a_{p^\alpha}(n)/a_p(n)$ is a polynomial of degree $\alpha-1$ in $\abs{n}_w$, where $w$ ranges over the set of words in $\{0,\ldots,p-1\}$ of length at most $\alpha$.
\end{theorem*}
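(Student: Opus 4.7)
The plan is to encode the carry-count generating polynomial of $t$ as a transfer-matrix product over its base-$p$ digits, and then extract polynomial expressions in the block counts $|t|_w$ by a carry-block decomposition combined with inclusion-exclusion.

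First, I would invoke Kummer's theorem, according to which $\nu_p\binom{t}{n}$ equals the number of carries occurring in the base-$p$ addition $n+(t-n)=t$. Introducing
\[
f(t;x):=\sum_{n=0}^{t}x^{\nu_p\binom{t}{n}},
\]
we have $a_{p^\alpha}(t)=\sum_{j=0}^{\alpha-1}[x^j]f(t;x)$. For each digit $t_i$ of $t$ and each pair of incoming/outgoing carries $(c_i,c_{i+1})\in\{0,1\}^2$, a direct count of the digit pairs $(n_i,(t-n)_i)$ compatible with the base-$p$ addition produces a $2\times 2$ transfer matrix $M_{t_i}(x)$ whose $(c_{i+1},c_i)$-entry is linear in $x$, with the factor $x$ inserted precisely when $c_{i+1}=1$. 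One obtains $f(t;x)=e_0^{\top}M_{t_{k-1}}(x)\cdots M_{t_0}(x)e_0$ with $e_0=(1,0)^{\top}$, and specializing to $x=0$ collapses the product to $a_p(t)=\prod_i(t_i+1)$.

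Next, I would expand the matrix product by decomposing each admissible carry sequence $(0,c_1,\ldots,c_{k-1},0)$ into its maximal runs of $1$'s, the \emph{carry blocks}. A block spanning carry positions $a,\ldots,b$ contributes the factor $x^{b-a+1}$ together with a local weight depending only on the $b-a+2$ consecutive digits $t_{a-1},\ldots,t_b$, while positions not covered by any block contribute $t_j+1$. Dividing through by $a_p(t)$ yields
\[
g(t;x):=\frac{f(t;x)}{a_p(t)}=\sum_{B}\prod_{[a,b]\in B}G_{[a,b]}(x),
\]
the sum running over families $B$ of pairwise non-touching blocks, with each $G_{[a,b]}(x)$ a rational function of $t_{a-1},\ldots,t_b$ times $x^{b-a+1}$. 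For $j<\alpha$, the coefficient $[x^j]g(t;x)$ is a finite sum over compositions of $j$ into block lengths, each summand being a sum over disjoint placements of blocks of the prescribed lengths. The unrestricted placement sum factors as a product of single-block sums, and a single-block sum of length $\ell$ is visibly of the form $\sum_{w}\lambda_w\,|t|_w$ with $w$ ranging over words of length $\ell+1$; the non-touching constraint is then enforced by inclusion-exclusion over pairs (triples, \ldots) of coinciding or abutting blocks, which fuses the participating blocks into a single window of at most $j+1\le\alpha$ consecutive digits and produces corrections of the same form. Summing over $j<\alpha$ expresses $a_{p^\alpha}(t)/a_p(t)$ as a polynomial of total degree at most $\alpha-1$ (the maximum number of blocks) in the variables $|t|_w$ with $|w|\le\alpha$.

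The main obstacle is the inclusion-exclusion bookkeeping in the last step: one must track all possible overlap and adjacency patterns among the blocks contributing to $[x^j]g(t;x)$ and verify uniformly that every fused window remains of length at most $\alpha$. A clean way to organize this is to package the whole construction as a transfer matrix whose state records a sliding window of $\alpha$ consecutive digits of $t$, so that every block interaction is automatically local to one window; the polynomial form in the $|t|_w$ then follows directly from this transfer-matrix structure.
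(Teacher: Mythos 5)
The paper does not prove this theorem; it quotes it as a known result of Rowland \cite{R2011}, also implicit in Barat and Grabner \cite{BG2001}, so there is no in-paper proof against which to compare your plan.

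Taken on its own terms, the skeleton of your plan is sound and close in spirit to Rowland's actual argument: Kummer's carry interpretation of $\nu_p\binom tn$, the generating polynomial $f(t;x)=\sum_{n\le t}x^{\nu_p\binom tn}$, the $2\times 2$ digit transfer matrix, and the decomposition of a carry pattern into maximal carry runs are all the right ingredients, and your computation of the transfer matrix and of $a_p(t)=\prod_i(t_i+1)$ is correct. The genuine gap, which you partly acknowledge, lies in the inclusion--exclusion step and in the ``clean'' substitute you propose. On the inclusion--exclusion side, the assertion that a fused cluster never requires a window longer than $\alpha$ is true but needs to be argued precisely: two blocks violate the non-touching constraint exactly when their windows share at least one digit, so a chain of $m$ fused blocks of lengths $\ell_1,\dots,\ell_m$ spans at most $\sum_i\ell_i+1\le j+1\le\alpha$ digits, and you must check that the same bound persists under arbitrary overlap patterns, not only pairwise adjacencies arranged along a line. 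More seriously, the shortcut you offer in the last sentence---a transfer matrix whose state records a sliding window of $\alpha$ digits---does not by itself deliver the conclusion: a transfer-matrix product over digits is not automatically a polynomial in the occurrence counts $\abs{t}_w$, and the mechanism that converts the noncommutative product into a sum of local window functions is precisely the carry-block cluster expansion you were trying to sidestep. The workable route is therefore to keep the cluster expansion and spell it out: define connected cluster weights, show that each connected weight contributing to $[x^j]$ depends on at most $j+1\le\alpha$ consecutive digits and hence is a linear combination of $\abs{t}_w$ with $\abs{w}\le\alpha$, and bound the number of independent clusters (hence the degree in the $\abs{t}_w$) by $j\le\alpha-1$. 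With that made rigorous the argument closes.
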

Moreover, Rowland notes that blocks $w_{\nu-1}\ldots w_0$ such that $w_{\nu-1}=0$ or $w_0=p-1$ do not occur in this polynomial.

Surprisingly, these polynomials concerning the {\it rows} of Pascal's triangle modulo powers of $2$ can also be used for the {\it columns}, which is due to the symmetry expressed by the identity 
\begin{equation*}
\nu_2\binom{n+t}t=\nu_2\binom{2^\lambda-1-t}n
\end{equation*}
valid for integers $t$ and $\lambda$ such that $1\leq t<2^\lambda$ and $0\leq n\leq 2^\lambda-1-t$.
This formula can be proved easily via~\eqref{eqn:binom_div} and the identity $s(2^\lambda-1-m)=\lambda-s(m)$ that holds for $0\leq m<2^\lambda$.
Moreover, Z\k{a}bek~\cite[Theorem 3]{Z56} proved (in particular) that the shortest period of the sequence $\p{\binom nt\bmod 2^\alpha}_{n\geq 0}$ equals $2^\lambda$, where $\lambda=\alpha+\mu$ and $2^{\mu}\leq t<2^{\mu+1}$.
(Note that this gives the minimal period $2^{s(t)+\mu+1}$ for the set in~\eqref{eqn:ct_binomial}.)
In particular, this means that $2^\alpha\mid\binom{n+t}t$ for $2^\lambda-1-t<n<2^\lambda$.
Writing
\[
b_{2^\alpha}(t)
=
\dens\left\{n:2^\alpha\nmid\binom{n+t}t\right\}
,
\]
we obtain therefore
\begin{align*}
b_{2^\alpha}(t)
&=
\frac 1{2^\lambda}\abs{\left\{n<2^\lambda:2^\alpha\nmid\binom{n+t}t\right\}}
\\&=
\frac 1{2^\lambda}\abs{\left\{n<2^\lambda-1-t:2^\alpha\nmid\binom{2^\lambda-1-t}n\right\}}
.
\end{align*}
Moreover, for all blocks $w_{\nu-1}\ldots w_0$ of length $\nu\leq \alpha$ such that $w_{\nu-1}=1$ and $w_0=0$ (other blocks do not occur in the polynomials from the above theorem) we have
\[
\abs{2^\lambda-1-t}_{w_{\nu-1}\ldots w_0}=
\abs{t}_{w'_{\nu-1}\ldots w'_0}
,
\]
where $w'_i=1-w_i$.
This is valid since the length of the most significant block of $1$s in the binary representation of $2^\lambda-1-t$ is at least $\alpha-1\geq \nu-1$.
With the help of these observations and using the formula $s(2^\lambda-1-t)=\lambda-s(t)$ again we obtain
\begin{equation*}
\begin{aligned}
b_{2^0}(t)&=0,
\\
b_{2^1}(t)
&=
2^{-\abs{t}_1},
\\
b_{2^2}(t)/b_{2^1}(t)
&=
1+\frac 12\abs{t}_{01},
\\
b_{2^3}(t)/b_{2^1}(t)
&=
1+\frac 38\abs{t}_{01}+\abs{t}_{011}+\frac 14\abs{t}_{001}+\frac 18\abs{t}_{01}^2,
\\
b_{2^4}(t)/b_{2^1}(t)&=
1
+\frac 5{12}\abs{t}_{01}
+\frac 12\abs{t}_{011}
+\frac 18\abs{t}_{001}
+2\abs{t}_{0111}
+\frac 12\abs{t}_{0101}
+\frac 12\abs{t}_{0011}
\\
&+
\frac 18\abs{t}_{0001}
+\frac 1{16}\abs{t}_{01}^2
+\frac 12\abs{t}_{01}\abs{t}_{011}
+\frac 18\abs{t}_{01}\abs{t}_{001}
+\frac 1{48}\abs{t}_{01}^3
\end{aligned}
\end{equation*}
and so on. In particular, we obtain explicit formulas for $c_t$ for $t$ having a fixed sum of digits, since 
\[
c_t=b_{2^{s(t)+1}}(t),
\]
see equation~\eqref{eqn:ct_binomial}.
Unfortunately we do not yet understand the coefficients of the polynomials $b_{2^k}(t)/b_2(t)$ well enough (for example, they always seem to be nonnegative, as remarked by Rowland~\cite{R2011}) in order to use these polynomials for deriving a proof of Cusick's conjecture.
\subsection{Hyperbinary expansions}
There is an interesting connection between Cusick's question and so-called {\it hyperbinary expansions} of a nonnegative integer that we would like to examine.
We first define a ``simplified array'' $\varphi$ related to $\delta$ (by just changing the start vector $\delta(\cdot,1)$ of the recurrence).
Define
\begin{equation*}
\begin{aligned}
\varphi(k,1)&=\begin{cases}1,&k=0,\\0&\mbox{otherwise,}\end{cases}
\\
\varphi(k,2t)&=\varphi(k,t),\\
\varphi(k,2t+1)&=\frac 12 \varphi(k-1,t)+\frac 12\varphi(k+1,t+1).
\end{aligned}
\end{equation*}
In Table~\ref{tbl:values_2} we display some values of $\varphi$.
\def\extrarowheight{1.5ex}             
\begin{table}
\[
\!\!\!
\begin{array}{r|cccccccccccccccc}
  &1&2&       3&       4&       5&       6&       7&       8&       9&      10&      11&      12&      13&      14&      15\\
\hline
 3& & &        &        &        &        &        &        &        &        &        &        &        &        &\frac 18\\
 2& & &        &        &        &        &\frac 14&        &        &        &\frac 14&        &\frac 14&\frac 14&        \\
 1& & &\frac 12&        &\frac 12&\frac 12&        &        &\frac 12&\frac 12&\frac 18&\frac 12&\frac 18&        &\frac 18\\
 0&1&1&        &       1&\frac 14&        &\frac 14&       1&\frac 14&\frac 14&\frac 14&        &\frac 14&\frac 14&\frac 14\\
-1& & &\frac 12&        &        &\frac 12&\frac 12&        &\frac 18&        &\frac 18&\frac 12&\frac 18&\frac 12&\frac 12\\
-2& & &        &        &\frac 14&        &        &        &        &\frac 14&\frac 14&        &\frac 14&        &        \\
-3& & &        &        &        &        &        &        &\frac 18&        &        &        &        &        &
\end{array}
\]
\vskip 1em
\caption{The array $\varphi$.} \label{tbl:values_2}
\end{table}
\def\extrarowheight{0ex}
Note that by linearity the values $\delta(k,t)$ can be recovered from the $t$-th column of $\varphi$:
we have
\begin{equation}\label{eqn:varphi_delta}
\delta(k,t)=\sum_{\substack{i,j\in\dZ\\i+j=k}}\varphi(i,t)\delta(j,1)
=\sum_{j\geq 0}2^{-1-j}\varphi(k-1+j,t),
\end{equation}
which is clearly valid for $t=1$, and an easy induction yields the statement.
Note moreover that
\begin{equation}\label{eqn:varphi_finite}
\varphi(k,t)=0\quad\mbox{for}\quad k\geq s(t),
\end{equation}
which is as easy to prove as the corresponding statement~\eqref{eqn:delta_finite} for $\delta$.
Interestingly, the (combined) property that $\tilde c_t\leq 1/2\leq c_t$ for all $t$ is implied by a statement on the quantity
\[  p_t=\sum_{k\geq 0}\varphi(k,t)=\sum_{k=0}^{s(t)-1}\varphi(k,t).  \]
According to our numerical experiments, we have $p_t\geq 1/2$ for $t<2^{30}$, and we suspect that this minoration holds indefinitely. Therefore the following lemma is of interest.
\begin{lemma}\label{lem:ct_sufficient}
Assume that
\begin{equation}\label{eqn:ct_sufficient}
  p_t\geq 1/2
\end{equation}
for all $t\geq 1$.
Then $\tilde c_t\leq 1/2\leq c_t$ holds for all $t\geq 1$.
\end{lemma}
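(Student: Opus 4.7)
The plan is to first reduce the conclusion to two linear inequalities in quantities attached to the single column $\varphi(\cdot, t)$, and then to verify these inequalities by strong induction on $t$, exploiting the recurrence for $\varphi$ and Lemma~\ref{lem:bes_lemme1}.

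Starting from the convolution identity~\eqref{eqn:varphi_delta}, I would swap summation orders in $\tilde c_t = \sum_{k \geq 1}\delta(k, t)$ to obtain
\[
\tilde c_t = \sum_{m \geq 0}\varphi(m, t)\bigl(1 - 2^{-m-1}\bigr) = p_t - \delta(1, t),
\]
using that $\delta(1, t) = \sum_{m \geq 0}\varphi(m, t)\, 2^{-m-1}$ (specialize~\eqref{eqn:varphi_delta} to $k = 1$). A parallel computation gives $c_t = p_t + \tfrac{1}{2}(\varphi(-1, t) - \delta(1, t))$. Hence the desired conclusion $\tilde c_t \leq 1/2 \leq c_t$ is equivalent to the pair of bounds
\[
p_t - \tfrac{1}{2} \;\leq\; \delta(1, t) \;\leq\; 2 p_t - 1 + \varphi(-1, t),
\]
which under the hypothesis $p_t \geq 1/2$ defines a (possibly degenerate but) nonempty interval of length $p_t + \varphi(-1, t) - 1/2 \geq 0$.

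I would then prove this pair of bounds by strong induction on $t$. The base $t = 1$ is a direct check, and the even case $t = 2u$ is immediate because $\delta(1, 2u) = \delta(1, u)$, $p_{2u} = p_u$, $\varphi(-1, 2u) = \varphi(-1, u)$, so the claim reduces to $u$. For the odd case $t = 2u + 1$, combine the recurrence $\delta(1, 2u+1) = \tfrac{1}{2}\delta(0, u) + \tfrac{1}{2}\delta(2, u+1)$ with the easily derived identities
\[
\delta(0, u) = \tfrac{1}{2}\varphi(-1, u) + \tfrac{1}{2}\delta(1, u), \qquad \delta(2, u+1) = 2\delta(1, u+1) - \varphi(0, u+1),
\]
together with the analogous recurrences for $p_{2u+1}$ and $\varphi(-1, 2u+1)$. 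This reduces both sides of the target interval to explicit linear combinations of the quantities $\delta(1, u)$, $\delta(1, u+1)$, and certain $\varphi$-values at columns $u$ and $u + 1$.

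The hard part will be closing the odd case of the induction. Both of the bounds $(\star)$ can be tight simultaneously (already at $u = 2$, $t = 5$, where $\delta(1, 5) = p_5 - 1/2 = 1/4$), so the inductive hypothesis for $u$ and $u + 1$ must be used with essentially no slack, and the hypothesis $p_{2u+1} \geq 1/2$ must enter as a \emph{joint} statement about the two columns $u$ and $u+1$ rather than as separate conditions on each. The most natural route appears to be a direct algebraic verification that the linear combination produced by the recurrence matches the desired bounds precisely once one substitutes $\delta(1, v) = p_v - \tilde c_v$ and the coupling identity above; the expected payoff is that $2 p_{2u+1} - 1 \geq 0$ accounts exactly for the deficit between the inductive bounds and $(\star)$ at $2u+1$.
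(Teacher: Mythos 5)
Your preparatory identities are correct: from~\eqref{eqn:varphi_delta} one does get $\tilde c_t=p_t-\delta(1,t)$ and $c_t=p_t+\tfrac12\bigl(\varphi(-1,t)-\delta(1,t)\bigr)$, so the conclusion is equivalent to the pair of bounds $p_t-\tfrac12\le\delta(1,t)\le 2p_t-1+\varphi(-1,t)$, and the auxiliary relations $\delta(0,u)=\tfrac12\varphi(-1,u)+\tfrac12\delta(1,u)$, $\delta(2,u+1)=2\delta(1,u+1)-\varphi(0,u+1)$ are also right. The genuine gap is the odd inductive step, which you leave as a plan, and the specific plan you sketch does not close. Written out in column-$u$ and column-$(u+1)$ data, the lower bound at $t=2u+1$ is equivalent to $\tfrac12\delta(1,u)+2\delta(1,u+1)\ge p_u+\tfrac12\varphi(-1,u)+p_{u+1}-1$. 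Substituting the inductive lower bounds $\delta(1,u)\ge p_u-\tfrac12$ and $\delta(1,u+1)\ge p_{u+1}-\tfrac12$ reduces this to $2p_{u+1}-\tfrac12\ge p_u+\varphi(-1,u)$, which is already false at $u=2$ (where $p_2=1$, $\varphi(-1,2)=0$, $p_3=\tfrac12$), and the extra resource $p_{2u+1}\ge\tfrac12$ cannot repair it: the needed strengthening becomes $\tfrac32 p_{u+1}\ge p_{2u+1}+\tfrac12\varphi(0,u+1)+\tfrac14$, which at $u=2$ reads $\tfrac34\ge 1$ even using the exact value $p_5=\tfrac34$. The underlying reason is visible in your own example $t=5$: there the target is an equality, $\delta(1,5)=p_5-\tfrac12=\tfrac14$, and the step needs the exact value $\delta(1,3)=\tfrac18$ (entering with coefficient $2$), whereas the invariant you carry only guarantees $\delta(1,3)\ge p_3-\tfrac12=0$. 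So the inductive statement is too weak to propagate itself; the analogous crude combination for the upper bound fails already at $u=1$. As it stands, the proof is incomplete at its central point.

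There is also a structural reason not to expect any such local induction to succeed, and it indicates the intended argument. Iterating the $\varphi$-recurrence $s(t)+1$ times and using~\eqref{eqn:varphi_finite} gives the unconditional identity $\delta(k,t)=\varphi(k,t_1)$ for $k\ge 0$, where $t_1=2^{s(t)+1}t+1$ (the resulting sum is exactly the right-hand side of~\eqref{eqn:varphi_delta}); together with the reflection symmetry $\varphi(k,t)=\varphi(-k,t')$, also proved by a short induction, this yields $c_t=p_{t_1}$ and $\tilde c_t=1-p_{t_1'}$. Thus your inequality pair at $t$ is literally equivalent to the hypothesis $p_s\ge\tfrac12$ at the two distant indices $s=t_1$ and $s=t_1'$, not at $s\in\{u,u+1,2u+1\}$; an induction that invokes the hypothesis only at nearby indices is therefore trying to prove a nonlocal statement from local data. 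If you want to salvage your route you would have to strengthen the induction to a joint invariant on adjacent columns that effectively encodes these identities — at which point you have reproduced the two-line proof that the identities give directly.
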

\begin{proof}
Let $t\geq 1$ and set $t_1=2^{s(t)+1}t+1$. We prove that
\begin{equation}\label{eqn:delta_varphi_reduction}
\delta(k,t)=\varphi(k,t_1)
\end{equation}
for $k\geq 0$, from which one half of the statement of the lemma will follow immediately.
Let $\ell\geq 1$ and $k\geq 0$ be integers. By the definition of $\varphi$ we have
$\varphi(k,2^\ell t+1)=\frac 12\varphi(k-1,t)+\frac 12 \varphi(k+1,2^{\ell-1}t+1)$,
which, applied iteratively, implies that
\[
\varphi(k,2^\ell t+1)
=\frac 12\varphi(k-1,t)+\frac 14 \varphi(k,t)+\cdots+\frac 1{2^\ell}\varphi(k-2+\ell,t)+\frac 1{2^\ell}\varphi(k+\ell,t+1).
\]
For $\ell=s(t)+1$ the last summand equals zero by~\eqref{eqn:varphi_finite}, and the remaining sum is the right hand side of~\eqref{eqn:varphi_delta}.

It remains to treat the second half of the statement, concerning $\tilde c_t\leq 1/2$.
To this end, we use the following symmetry property of the double family $\varphi$.
For $2^\lambda\leq t<2^{\lambda+1}$ we define $t'=2^{\lambda+1}-(t-2^\lambda)$.
Then for all $t\geq 1$ and $k\in\dZ$ we have
\begin{equation}\label{eqn:varphi_symmetry}
\varphi(k,t)=\varphi(-k,t').
\end{equation}
We prove this by induction, the case that $t=1$ being trivial. The case $2\mid t$ follows from $(2t)'=2t'$.
Assume that $t=2u+1$. Then $u'=(t'+1)/2$ and $(u+1)'=(t'-1)/2$.
We obtain
\begin{align*}
\varphi(k,t)&=\frac 12\varphi(k-1,u)+\frac 12\varphi(k+1,u+1)\\
&=\frac 12\varphi(-k+1,u')+\frac 12\varphi(-k-1,(u+1)')\\
&=\frac 12\varphi(-k-1,(t'-1)/2)+\frac 12\varphi(-k+1,(t'+1)/2)\\
&=\varphi(-k,t').
\end{align*}
From~\eqref{eqn:delta_varphi_reduction},~\eqref{eqn:varphi_symmetry}, the property $\sum_{k\in\dZ}\varphi(k,t)=1$ and the assumption~\eqref{eqn:ct_sufficient} (in this order) it follows that
\[
\tilde c_t=\sum_{k\geq 1}\delta(k,t)
=\sum_{k\geq 1}\varphi(k,t_1)
=\sum_{k\geq 1}\varphi(-k,t_1')
=1-p_{t_1'}
\leq 1/2.
\]
This finishes the proof of the lemma.
\end{proof}
\begin{remark}
We note that $c_t$ and $p_t$ are not directly related to each other by an inequality.
For example, we have $3/4=c_1<p_1=1$ and $1/2=p_3<c_3=11/16$.

Moreover, we do not get the strict inequality $c_t>1/2$ in Lemma~\ref{lem:ct_sufficient};
at the moment it does not seem obvious how to prove that $c_t\neq 1/2$ for all $t$.
\end{remark}
A {\it hyperbinary expansion} \cite{DE2015} of a nonnegative integer $n$ is a sequence $(\varepsilon_{\nu-1},\ldots,\varepsilon_0)\in \{0,1,2\}^{\nu}$ such that $\sum_{0\leq i<\nu}\varepsilon_i2^i=n$.
We call such an expansion {\it proper} if either $\nu=0$ or $\nu>0$ and $\varepsilon_{\nu-1}\neq 0$.
The following proposition connects these expansions to our problem.
\begin{proposition}\label{prp:hyperbinary}
For integers $i,j\geq 0$ and $t\geq 1$ let $h_{i,j}(t)$ be the number of proper hyperbinary expansions $(\varepsilon_{\nu-1},\ldots,\varepsilon_0)$ of $t-1$ such that $|\{0\leq \ell<\nu:\varepsilon_\ell=2\}|=i$ and $|\{0\leq \ell<\nu:\varepsilon_\ell=0\}|=j$.
Then
\begin{equation}\label{eqn:hyperbin_phi}
\varphi(k,t)=\sum_{\substack{i,j\geq 0\\i-j=k}}2^{-(i+j)}h_{i,j}(t).
\end{equation}
\end{proposition}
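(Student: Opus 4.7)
The plan is to prove equation~\eqref{eqn:hyperbin_phi} by induction on $t$, showing that the right-hand side
\[
H(k,t)=\sum_{\substack{i,j\geq 0\\ i-j=k}}2^{-(i+j)}h_{i,j}(t)
\]
obeys the same two-dimensional recursion as $\varphi(k,t)$ and has the same base case. For the base case $t=1$, we are looking at proper hyperbinary expansions of $0$; the only such expansion is the empty one ($\nu=0$), contributing to $h_{0,0}(1)=1$ with all other $h_{i,j}(1)=0$. Hence $H(k,1)=[k=0]=\varphi(k,1)$.

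Next I would derive recurrences for $h_{i,j}(t)$ itself by splitting the set of proper hyperbinary expansions of $t-1$ according to the value of the least significant digit $\varepsilon_0$. For $t=2t'$ with $t'\geq 1$, the integer $t-1=2t'-1$ is odd, so necessarily $\varepsilon_0=1$ (the digits $0$ and $2$ both being even), and erasing this digit gives a bijection with proper hyperbinary expansions of $t'-1$ which preserves the counts of $0$'s and $2$'s. Thus $h_{i,j}(2t')=h_{i,j}(t')$, and immediately $H(k,2t')=H(k,t')$. For $t=2t'+1$ with $t'\geq 1$, the integer $t-1=2t'$ is even, so $\varepsilon_0\in\{0,2\}$: erasing $\varepsilon_0=0$ gives a bijection with proper hyperbinary expansions of $t'$ and decreases the number of $0$'s by one, while erasing $\varepsilon_0=2$ gives a bijection with proper hyperbinary expansions of $t'-1$ and decreases the number of $2$'s by one. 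Translating back via the shift $t\mapsto t-1$ in the definition of $h_{i,j}$, this yields
\[
h_{i,j}(2t'+1)=h_{i-1,j}(t')+h_{i,j-1}(t'+1),
\]
with the convention that entries of negative index vanish.

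Substituting this into $H(k,2t'+1)$ and reindexing $(i',j')=(i-1,j)$ in the first sum and $(i',j')=(i,j-1)$ in the second, each factor $2^{-(i+j)}$ becomes $2^{-1}\cdot 2^{-(i'+j')}$, and the diagonal constraint $i-j=k$ becomes $i'-j'=k-1$ and $i'-j'=k+1$ respectively. This gives
\[
H(k,2t'+1)=\tfrac 12 H(k-1,t')+\tfrac 12 H(k+1,t'+1),
\]
which is precisely the recursion defining $\varphi(k,2t'+1)$. By the inductive hypothesis $H(\cdot,t')=\varphi(\cdot,t')$ and $H(\cdot,t'+1)=\varphi(\cdot,t'+1)$, so $H(k,t)=\varphi(k,t)$ for all $t$ and $k$, completing the induction.

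The main obstacle is simply the bookkeeping of propriety and of boundary cases: one must verify that stripping the least significant digit preserves properness (which it does, as the leading digit is untouched), that the conventions $h_{-1,j}=h_{i,-1}=0$ correctly handle the cases where $\varepsilon_0=0$ or $\varepsilon_0=2$ is forbidden, and that the recursion for $\varphi$ is only invoked at $t\geq 2$, matching the range where the combinatorial recursion for $h_{i,j}$ has been established. Apart from this, the argument is a direct term-by-term comparison.
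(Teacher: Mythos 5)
Your proof is correct and takes essentially the same route as the paper: derive the recurrences $h_{i,j}(2t')=h_{i,j}(t')$ and $h_{i,j}(2t'+1)=h_{i-1,j}(t')+h_{i,j-1}(t'+1)$ by a least-significant-digit stripping bijection (the paper merely states these, noting they ``can be proved easily''), and then check by induction that the weighted sum $H(k,t)$ obeys the same recursion as $\varphi(k,t)$ with the same base case. The bookkeeping of propriety and boundary cases you flag at the end is indeed all that needs care, and you handle it correctly.
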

As an example, we assume that $t=5$.
The proper hyperbinary expansions of $4=t-1$ are $(1,0,0)$, $(2,0)$ and $(1,2)$.
These expansions correspond to $k=-2,0$ and $1$ respectively and their {\it weights}, given by the factor $2^{-(i+j)}$, are $1/4,1/4$ and~$1/2$.
This explains column~$5$ in Table~\ref{tbl:values_2}.

{\em Proof of Proposition~\ref{prp:hyperbinary}}.
The integers $h_{i,j}(t)$ satisfy the following recurrence relation,
which can be proved easily by resorting to the definition of $h_{i,j}(t)$.
\begin{equation*}
\begin{array}{lcll}
h_{0,0}(1)&=&1,&\\
h_{i,j}(1)&=&0&\mbox{for }(i,j)\neq (0,0),\\
h_{i,j}(2t)&=&h_{i,j}(t)&\mbox{for }i,j\geq 0,\\
h_{i,0}(2t+1)&=&h_{i-1,0}(t)&\mbox{for }i\geq 1,\\
h_{0,j}(2t+1)&=&h_{0,j-1}(t+1)&\mbox{for }j\geq 1,\\
h_{i,j}(2t+1)&=&h_{i-1,j}(t)+h_{i,j-1}(t+1)&\mbox{for }i,j\geq 1.
\end{array}
\end{equation*}
In order to prove~\eqref{eqn:hyperbin_phi}, we proceed by induction.
The statement is clearly valid for $t=1$, and the case $2\mid t$ is a trivial consequence of the recurrences governing $\varphi$ and $h$. Moreover, we get for $t\geq 1$
\begin{align*}
\varphi(k,2t+1)&=\frac 12\varphi(k-1,t)+\frac 12\varphi(k+1,t+1)
\\&=\frac 12\sum_{\substack{i,j\geq 0\\i-j=k-1}}2^{-(i+j)}h_{i,j}(t)+\frac 12\sum_{\substack{i,j\geq 0\\i-j=k+1}}2^{-(i+j)}h_{i,j}(t)
\\&=\sum_{\substack{i\geq 1,j\geq 0\\i-j=k}}h_{i-1,j}(t)+\sum_{\substack{i\geq 0,j\geq 1\\i-j=k}}h_{i,j-1}(t+1)
\\&=\sum_{\substack{i\geq 1\\i=k}}2^{-i}h_{i-1,0}(t)+\sum_{\substack{j\geq 1\\-j=k}}2^{-j}h_{0,j-1}(t+1)+\sum_{\substack{i,j\geq 1\\i-j=k}}2^{-(i+1)}h_{i,j}(2t+1)
\\&=\sum_{\substack{i,j\geq 0\\i-j=k}}2^{-(i+j)}h_{i,j}(2t+1).
\end{align*}
\begin{corollary}
Assume that
\[  \sum_{i\geq j\geq 0}2^{-(i+j)}h_{i,j}(t)\geq 1/2  \]
for all $t\geq 1$.
Then $\tilde c_t\leq 1/2\leq c_t$ holds for all $t\geq 1$.
\end{corollary}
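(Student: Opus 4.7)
The plan is to reduce the corollary directly to Lemma~\ref{lem:ct_sufficient} by using Proposition~\ref{prp:hyperbinary} to translate its hypothesis $p_t\geq 1/2$ into the language of hyperbinary expansions. Concretely, I would start from the definition
\[
p_t=\sum_{k\geq 0}\varphi(k,t)
\]
and insert the formula $\varphi(k,t)=\sum_{i,j\geq 0,\;i-j=k}2^{-(i+j)}h_{i,j}(t)$ provided by Proposition~\ref{prp:hyperbinary}. This produces a nonnegative double sum over $k\geq 0$ and over pairs $(i,j)$ with $i,j\geq 0$ and $i-j=k$, so Fubini (or just a reindexing, since every series is absolutely convergent with total mass $1$) allows me to exchange the order of summation.

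Next I would identify the resulting index set: the combined constraints $i,j\geq 0$ and $i-j\geq 0$ are equivalent to $i\geq j\geq 0$, so the exchange yields
\[
p_t=\sum_{i\geq j\geq 0}2^{-(i+j)}h_{i,j}(t).
\]
The assumption of the corollary then says precisely that $p_t\geq 1/2$ for every $t\geq 1$, and the conclusion $\tilde c_t\leq 1/2\leq c_t$ follows immediately by invoking Lemma~\ref{lem:ct_sufficient}.

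I do not anticipate any real obstacle: the statement is essentially a dictionary entry, and the only thing to verify carefully is the bookkeeping of summation indices in the step above. In particular, there is no need to track signs or reprove convergence, since all $h_{i,j}(t)$ and all weights $2^{-(i+j)}$ are nonnegative and the full sum $\sum_{k\in\dZ}\varphi(k,t)=1$ is a straightforward induction from the defining recurrence of $\varphi$.
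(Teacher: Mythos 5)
Your proof is correct and matches the intended argument: the paper states the corollary without proof precisely because it is the immediate combination of Lemma~\ref{lem:ct_sufficient} and Proposition~\ref{prp:hyperbinary}, via the reindexing $\{(i,j): i,j\ge 0,\ i-j=k\ \text{for some}\ k\ge 0\} = \{(i,j): i\ge j\ge 0\}$ that you carry out.
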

\section{Proof of Theorem~\ref{thm:main}}
The idea of the proof of Theorem~\ref{thm:main} is to derive a concentration result on the values $c_t$ and $\tilde c_t$.
For this purpose we start with the computation of the expected value of $c_t$ in dyadic intervals,
\[  m_\lambda = \frac 1{2^\lambda} \sum_{2^\lambda\leq t<2^{\lambda+1}}c_t,  \]
and of the expected value of $\tilde c_t$, \[
\tilde m_\lambda = \frac 1{2^\lambda} \sum_{2^\lambda\leq t<2^{\lambda+1}}\tilde c_t,  \]
and show that $\tilde m_\lambda < 1/2 < m_\lambda$ for $\lambda \geq 1$. Moreover, we give some terms of asymptotic expansions of these quantities.
Based on numerical experiments we expect that the standard deviation of $c_t$ on dyadic intervals $[2^\lambda,2^{\lambda+1}-1]$ is significantly smaller than $m_\lambda-1/2$ as $\lambda$ grows, that is, we have strong concentration.
More precisely, with the help of this property an application of Chebychev's inequality yields $c_t>1/2$ for $t$ in a set of asymptotic density $1$.
\subsection{The mean value of $c_t$}
In order to find asymptotic formulas for $m_\lambda$ and $\tilde m_\lambda$, we introduce for $\lambda\geq 0$ and $k\in\dZ$ the expression
\[
m_{k,\lambda}=
\frac 1{2^\lambda}
\sum_{2^\lambda\leq t<2^{\lambda+1}}
\delta(k,t).
\]
We split into even and odd indices and observe that $\delta(k+1,2^\lambda)=\delta(k+1,2^{\lambda-1})$ to obtain
\begin{align*}
m_{k,\lambda}&=
\frac 1{2^\lambda}
\sum_{2^{\lambda-1}\leq t<2^\lambda}\delta(k,2t)
+
\frac 1{2^\lambda}
\sum_{2^{\lambda-1}\leq t<2^\lambda}
\frac 12
\p{
\delta(k-1,t)+\delta(k+1,t+1)
}
\\
&=
\frac 14\p{
m_{k-1,\lambda-1}
+
2 m_{k,\lambda-1}
+
m_{k+1,\lambda-1}
}
\end{align*}
for $\lambda \geq 1$.
As can be guessed from the appearance of this recurrence, iterated application leads to an expression involving binomial coefficients:
for $0\leq \mu\leq \lambda$ we get
\[
m_{k,\lambda}=
\frac 1{4^\mu}\sum_{s=-\mu}^{\mu}
\binom{2\mu}{s+\mu}m_{k+s,\lambda-\mu}
.
\]
Observing that
\[m_{k,0}=\delta(k,1)=
\begin{cases}2^{k-2},&k\leq 1,\\0&\mbox{otherwise,}\end{cases}
\]
we obtain
\begin{align}
m_{k,\lambda}
&=
\frac 1{4^\lambda}
\sum_{s=-\lambda}^{\lambda}
\binom{2\lambda}{s+\lambda}
\delta(k+s,1)\nonumber
\\&=
\frac 1{4^\lambda}
\sum_{s=0}^{2\lambda}
\binom{2\lambda}{s}
\delta(k+s-\lambda,1)\label{eqn:general_mean_value}
\\&=
\frac 1{4^\lambda}
\sum_{s=0}^{\lambda+1-k}
\binom{2\lambda}{s}
2^{k+s-\lambda-2}\nonumber
\end{align}
and therefore
\begin{align*}
m_\lambda
&
=
\frac 1{2^\lambda}
\sum_{2^\lambda\leq t<2^{\lambda+1}}c_t
=
\sum_{k=0}^{\lambda+1} m_{k,\lambda}
=
\frac 1{4^\lambda}
\sum_{s=0}^{\lambda+1}
\sum_{k=0}^{\lambda+1-s}
\binom{2\lambda}{s}
2^{k+s-\lambda-2}
\\&=
\frac 1{4^\lambda}
\sum_{s=0}^{\lambda+1}
\binom{2\lambda}{s}
(1-2^{s-\lambda-2})
.
\end{align*}
Analogously, we get
\[
\tilde m_\lambda
=
\frac 1{4^\lambda}\sum_{s=0}^{\lambda}\binom{2\lambda}{s}\p{1-2^{s-\lambda-1}}
.
\]
\begin{proposition}\label{thm:ct_mean_value}
For all $\lambda\geq 1$ we have
\[
\tilde m_\lambda<1/2<m_\lambda
.
\]
Moreover, as $\lambda\ra\infty$ we have
\[
m_\lambda
=
\frac 12
+
\frac 1{2\sqrt{\pi\lambda}}
+
\frac{15}{16\sqrt{\pi}\lambda^{3/2}}
+
O\p{\lambda^{-5/2}}
\]
and
\[
\tilde m_\lambda
=
\frac 12
-
\frac 1{2\sqrt{\pi\lambda}}
+
\frac{49}{16\sqrt{\pi}\lambda^{3/2}}
+
O\p{\lambda^{-5/2}}
.
\]
\end{proposition}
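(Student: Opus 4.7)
The plan is to work directly from the closed forms $m_\lambda=\frac{1}{4^\lambda}\sum_{s=0}^{\lambda+1}\binom{2\lambda}{s}(1-2^{s-\lambda-2})$ and $\tilde m_\lambda=\frac{1}{4^\lambda}\sum_{s=0}^{\lambda}\binom{2\lambda}{s}(1-2^{s-\lambda-1})$ displayed just above the proposition, using only elementary binomial identities together with Stirling's formula. No new recurrence or generating function is required.

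For the strict inequalities, the key ingredient is $\sum_{s=0}^{\lambda}\binom{2\lambda}{s}=\tfrac12(4^\lambda+\binom{2\lambda}{\lambda})$, which follows from $\binom{2\lambda}{s}=\binom{2\lambda}{2\lambda-s}$. This rewrites
\[
\tilde m_\lambda-\tfrac12=\frac{1}{4^\lambda}\Bigl[\tfrac12\binom{2\lambda}{\lambda}-\sum_{s=0}^{\lambda}\binom{2\lambda}{s}2^{s-\lambda-1}\Bigr],
\]
and negativity is immediate: the single term $s=\lambda$ in the subtracted sum already equals $\tfrac12\binom{2\lambda}{\lambda}$, and all other terms are strictly positive. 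The analogous manipulation reduces $m_\lambda-\tfrac12$ to $\frac{1}{4^\lambda}\bigl[\tfrac14\binom{2\lambda}{\lambda}+\tfrac12\binom{2\lambda}{\lambda+1}-\sum_{s=0}^{\lambda-1}\binom{2\lambda}{s}2^{s-\lambda-2}\bigr]$. Here the consecutive ratio $\binom{2\lambda}{s+1}2^{s+1}/\binom{2\lambda}{s}2^s=2(2\lambda-s)/(s+1)$ is at least $2$ throughout $s\le\lambda-1$, so the remaining sum is strictly less than twice its last term; combined with the identity $\binom{2\lambda}{\lambda-1}=\tfrac{\lambda}{\lambda+1}\binom{2\lambda}{\lambda}$ and the trivial inequality $\lambda<3\lambda+1$, this yields $m_\lambda>\tfrac12$ for every $\lambda\ge 1$.

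For the asymptotic expansions, I would substitute $s=\lambda-j$ and set $q_j:=\binom{2\lambda}{\lambda-j}/4^\lambda=q_{-j}$, so that the subtracted sums become $\tfrac14\sum_{j=-1}^{\lambda}q_j 2^{-j}$ and $\tfrac12\sum_{j=0}^{\lambda}q_j 2^{-j}$, respectively. Stirling provides $q_0=(\pi\lambda)^{-1/2}\bigl(1-\tfrac{1}{8\lambda}+O(\lambda^{-2})\bigr)$, and taking logarithms in the product representation $q_j/q_0=\prod_{k=1}^{j}\frac{\lambda-k+1}{\lambda+k}$ and Taylor expanding gives $q_j/q_0=1-\tfrac{j^2}{\lambda}+\tfrac{j^4+j^2}{2\lambda^2}+O(j^6/\lambda^3)$ uniformly for $|j|=o(\sqrt\lambda)$. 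Because the geometric weight $2^{-j}$ makes the contribution of $|j|>C\log\lambda$ exponentially small in $\lambda$, the sums can be evaluated by integrating this expansion termwise against $2^{-j}$, reducing everything to the moment sums $\sum_{j\ge-1}2^{-j}=4$, $\sum_{j\ge-1}j^2 2^{-j}=8$, $\sum_{j\ge-1}(j^4+j^2)2^{-j}=160$, all read off $z\mapsto\sum_{j\ge 0}j^kz^j$ at $z=\tfrac12$. Combined with the expansion of $q_0$ and $\binom{2\lambda}{\lambda+1}=\tfrac{\lambda}{\lambda+1}\binom{2\lambda}{\lambda}$, the stated constants $\tfrac{1}{2\sqrt\pi}$, $\tfrac{15}{16\sqrt\pi}$ for $m_\lambda$ and their counterparts $-\tfrac{1}{2\sqrt\pi}$, $\tfrac{49}{16\sqrt\pi}$ for $\tilde m_\lambda$ fall out. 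The main (though essentially routine) obstacle I anticipate is keeping all three orders $\lambda^{-1/2}$, $\lambda^{-3/2}$, $\lambda^{-5/2}$ consistent through the simultaneous Stirling and Taylor expansions; that is purely bookkeeping but is the natural place for arithmetic slips.
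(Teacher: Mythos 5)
Your proof is correct and the stated constants come out right on a recomputation. For the strict inequalities your argument and the paper's are essentially the same: both reduce, via $\sum_{s\le\lambda}\binom{2\lambda}{s}=\tfrac12\bigl(4^\lambda+\binom{2\lambda}{\lambda}\bigr)$, to observing that the central terms dominate the subtracted geometric-weighted tail. Where you genuinely diverge is the asymptotic expansion. The paper proves and then exploits the closed-form identity
\[
\sum_{s=0}^{\lambda}\binom{2\lambda}{s}2^s=\tfrac23\,2^\lambda\binom{2\lambda}{\lambda}+\tfrac12\,9^\lambda\Bigl(1-\tfrac13\sum_{k=0}^\lambda\binom{2k}{k}\bigl(\tfrac29\bigr)^k\Bigr),
\]
so that everything hinges on $[x^\lambda]\bigl((1-x)\sqrt{1-\tfrac89x}\bigr)^{-1}$, which is extracted by singularity analysis. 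You instead recentre the sum at $s=\lambda$, expand $q_j/q_0=\binom{2\lambda}{\lambda-j}/\binom{2\lambda}{\lambda}$ in powers of $1/\lambda$, and trade the sum for the geometric moment sums $\sum_j 2^{-j}=4$, $\sum_j j^2 2^{-j}=8$, $\sum_j (j^4+j^2)2^{-j}=160$ (over $j\ge -1$, with the obvious shifts for $\tilde m_\lambda$); the $2^{-j}$ weight makes the tail beyond $j>C\log\lambda$ of size $O(\lambda^{-C})$. This is a Laplace-type argument in place of a transfer theorem. Your route is more elementary and self-contained --- it needs only Stirling plus bookkeeping, and you never have to conjure the algebraic identity above --- and it would adapt readily to similarly weighted binomial sums where no such closed form is visible. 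The paper's route is a bit shorter once the identity is in hand and lets the standard singularity-analysis machinery do the work. One point to be careful about if you write yours out in full: carry the uniformity window $|j|=O(\log\lambda)$ (not the weaker $|j|=o(\sqrt\lambda)$) through the termwise summation, since for larger $j$ the polynomial approximation to $q_j/q_0$ is useless and it is only the decay of $2^{-j}$, together with $q_j\le q_0$, that makes that range negligible.
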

\begin{proof}
We have
\begin{align*}
4^\lambda m_\lambda
&=
\sum_{s=0}^{\lambda+1}
\binom{2\lambda}{s}
\p{1-2^{s-\lambda-2}}
\\&=
\sum_{s=0}^{\lambda-1}\binom{2\lambda}{s}
+
\frac 12
\binom{2\lambda}{\lambda}
+
\frac 12
\binom{2\lambda}{\lambda}
+\binom{2\lambda}{\lambda+1}
-
\frac 14
2^{-\lambda}
\sum_{s=0}^{\lambda}
\binom{2\lambda}{s}
2^s
-
\frac 12\binom{2\lambda}{\lambda+1}
\\&=
\frac 12 4^\lambda+
\p{1-\frac 1{2(\lambda+1)}}\binom{2\lambda}{\lambda}
-
\frac 14
2^{-\lambda}
\sum_{s=0}^{\lambda}
\binom{2\lambda}{s}
2^s
.
\end{align*}
From this it can be seen easily that $m_\lambda>1/2$.
By the identity
\[
\sum_{s=0}^{\lambda}
\binom{2\lambda}{s}
2^s
=
\frac 23 2^\lambda\binom{2\lambda}{\lambda}
+\frac 12 9^\lambda\p{1-\frac 13\sum_{k=0}^\lambda\binom{2k}{k}\p{\frac 29}^k},
\]
which can be verified by induction (for example),
the identity
\[
\sum_{k=0}^\lambda\binom{2k}{k}\p{\frac 29}^k
=
[x^\lambda]\frac 1{(1-x)\sqrt{1-\frac 89x}}
\]
and the asymptotics
\[
\binom{2\lambda}\lambda
=4^\lambda\frac 1{\sqrt{\pi\lambda}}\p{1-\frac 1{8\lambda}+O(\lambda^{-2})}
\]
and
\[
[x^\lambda]\frac 1{(1-x)\sqrt{1-\frac 89x}}
=
3+\frac 1{\sqrt{\pi\lambda}}\p{\frac 89}^\lambda\p{-8+\frac {37}\lambda+O(\lambda^{-2})},
\]
which can be shown using singularity analysis \cite{FO90}, we obtain the asymptotic identity for $m_\lambda$.
The proof of second half of the proposition, concerning $\tilde m_\lambda$, is along the same lines.
\end{proof}
\subsection{A generating function for the second moment of $c_t$}
Next we study the {\it second moments} of $c_t$. Here it is convenient to work with (multivariate) generating functions.
\begin{lemma}
Set 
\[
a_{\lambda,k,\ell}
=
4^\lambda
\sum_{2^\lambda\leq t<2^{\lambda+1}}
\delta(\lambda+1-k,t)\,\delta(\lambda+1-\ell,t).
\]
Then the generating function 
$A(x,y,z) = \sum_{\lambda,k,\ell\ge 0} a_{\lambda,k,\ell} x^\lambda y^k z^\ell$ is given by
\begin{equation}\label{eqn:Arep}
A(x,y,z) = \frac 1{(2-y)(2-z)} \cdot \frac{1+\frac {xz^2}{1-2xz(1+yz)}+\frac{xy^2}{1-2xy(1+yz)}}
{1-x(1+yz)^2-\frac{xyz}{1-2xz(1+yz)}-\frac {xyz}{1-2xy(1+yz)}}
.
\end{equation}
Furthermore we have
\begin{equation}\label{eqn:secmomrep}
\sum_{2^\lambda\leq t<2^{\lambda+1}}c_t^2
=
\frac 1{4^\lambda}\sum_{k,\ell\leq \lambda+1}a_{\lambda,k,\ell}
\quad\mbox{and}\quad
\sum_{2^\lambda\leq t<2^{\lambda+1}}\tilde c_t^2
=
\frac 1{4^\lambda}\sum_{k,\ell\leq \lambda}a_{\lambda,k,\ell},
\end{equation}
so that 
\begin{equation}\label{eqn:second_moment}
\frac 1{2^\lambda}
\sum_{2^\lambda\leq t<2^{\lambda+1}}c_t^2
=
\frac 1{8^\lambda}
\left[x^{\lambda} y^{\lambda+1} z^{\lambda+1}\right]
\frac{A(x,y,z)}{(1-y)(1-z)}
\end{equation}
and
\begin{equation}\label{eqn:second_moment_2}
\frac 1{2^\lambda}
\sum_{2^\lambda\leq t<2^{\lambda+1}}\tilde c_t^2
=
\frac 1{8^\lambda}
\left[x^{\lambda} y^{\lambda} z^{\lambda}\right]
\frac{A(x,y,z)}{(1-y)(1-z)}.
\end{equation}
\end{lemma}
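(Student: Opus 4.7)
The plan is to reduce the trivariate generating function $A(x,y,z)$ to a series whose coefficients $S_\lambda$ are dyadic sums of products of the ``column generating functions''
\[ \phi_t(w)=\sum_{j\in\dZ}\delta(j,t)\,w^j, \]
and to derive a closed form for these dyadic sums from Lemma~\ref{lem:bes_lemme1}. Reading off~\eqref{eqn:density_recurrence} gives $\phi_1(w)=w^2/(2w-1)$, $\phi_{2t}(w)=\phi_t(w)$, and $\phi_{2t+1}(w)=(w/2)\phi_t(w)+(1/(2w))\phi_{t+1}(w)$. Because $\delta(j,t)=0$ for $j>s(t)$ and $s(t)\le\lambda+1$ on $2^\lambda\le t<2^{\lambda+1}$, the inner sum $\sum_{k\ge 0}\delta(\lambda+1-k,t)y^k$ equals $y^{\lambda+1}\phi_t(1/y)$ without any truncation loss, so
\[ A(x,y,z)=yz\cdot P_S(x,y,z),\qquad P_S=\sum_{\lambda\ge 0}(4xyz)^\lambda S_\lambda(1/y,1/z), \]
where $S_\lambda(w,v)=\sum_{2^\lambda\le t<2^{\lambda+1}}\phi_t(w)\phi_t(v)$.

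The main work is to obtain a closed form for $P_S$. Splitting $t$ by parity turns the recurrence for $S_\lambda$ into an expression in $S_{\lambda-1}$ together with cross products $\phi_u(w)\phi_{u+1}(v)$ and $\phi_{u+1}(w)\phi_u(v)$ arising from the four-fold expansion of $\phi_{2u+1}(w)\phi_{2u+1}(v)$, which forces the introduction of the auxiliary sum $T_\lambda(w,v)=\sum_{2^\lambda\le t<2^{\lambda+1}}\phi_t(w)\phi_{t+1}(v)$. A boundary trick $\phi_{2^\lambda}=\phi_1=\phi_{2^{\lambda-1}}$ collapses $\sum_u\phi_{u+1}(w)\phi_{u+1}(v)$ back to $S_{\lambda-1}(w,v)$, and an analogous parity split produces a recurrence for $T_\lambda$ in $S_{\lambda-1}$ and $T_{\lambda-1}$. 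Substituting $w=1/y$, $v=1/z$, summing against $(4xyz)^\lambda$, and using $T_\lambda(v,w)=T_\lambda(1/z,1/y)$, these convert into a $3\times 3$ linear system in $P_S$, $P_T$, $P_{T'}$ over the field of rational functions in $x,y,z$. Solving for $P_T,P_{T'}$ in terms of $P_S$ and back-substituting gives a single equation $P_S\cdot D_0=S_0\cdot N$; the algebraic identity
\[ \frac{2x^2yz^2(1+yz)}{1-2xz(1+yz)}=\frac{xyz}{1-2xz(1+yz)}-xyz \]
and its $y\leftrightarrow z$ counterpart then absorb the stray $\pm 2xyz$ produced by the expansion $4xyz\bigl(1+\tfrac{1}{4yz}+\tfrac{yz}{4}\bigr)=x(1+4yz+(yz)^2)$ and reshape $D_0$ into the denominator of~\eqref{eqn:Arep}. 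Using the initial values $S_0=T_0=\phi_1(1/y)\phi_1(1/z)=\tfrac{1}{yz(2-y)(2-z)}$ and multiplying by $yz$ yields the prefactor $\tfrac{1}{(2-y)(2-z)}$, completing~\eqref{eqn:Arep}. The main obstacle is keeping track of the boundary term at $u+1=2^\lambda$ throughout the parity splits and spotting the algebraic simplification that transforms the naively computed denominator into the stated form.

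Once $A(x,y,z)$ is known, the remaining identities are bookkeeping. Re-indexing $k\mapsto\lambda+1-k$ gives $c_t=\sum_{k=0}^{\lambda+1}\delta(\lambda+1-k,t)$, hence $c_t^2=\sum_{k,\ell=0}^{\lambda+1}\delta(\lambda+1-k,t)\delta(\lambda+1-\ell,t)$; summing over $2^\lambda\le t<2^{\lambda+1}$ and dividing by $4^\lambda$ yields the first half of~\eqref{eqn:secmomrep}, and the same re-indexing applied to $\tilde c_t=\sum_{k=1}^{\lambda+1}\delta(k,t)=\sum_{k=0}^{\lambda}\delta(\lambda+1-k,t)$ yields the second. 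For~\eqref{eqn:second_moment} and~\eqref{eqn:second_moment_2}, observe that $\tfrac{1}{(1-y)(1-z)}=\sum_{i,j\ge 0}y^iz^j$, so $[x^\lambda y^m z^n]\tfrac{A(x,y,z)}{(1-y)(1-z)}=\sum_{k\le m,\,\ell\le n}a_{\lambda,k,\ell}$; taking $m=n=\lambda+1$ (resp.\ $m=n=\lambda$) and dividing by $2^\lambda\cdot 4^\lambda=8^\lambda$ gives the claimed diagonal formulas.
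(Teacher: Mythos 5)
Your proof is correct and, while the bookkeeping is packaged differently, it follows essentially the same route as the paper. The paper works directly with the three coefficient arrays $a_{\lambda,k,\ell}$, $b_{\lambda,k,\ell}$, $c_{\lambda,k,\ell}$, derives coupled recurrences for them from~\eqref{eqn:density_recurrence} by parity-splitting the dyadic sum, and converts these to a $3\times 3$ linear system for the trivariate generating functions $A$, $B$, $C$. You instead encode each column of $\delta$ as a one-variable rational series $\phi_t(w)$ and form the dyadic sums $S_\lambda$, $T_\lambda$, $T'_\lambda$ of products of these; the translation $A=yz\,P_S$, $B=yz\,P_T$, $C=yz\,P_{T'}$ shows the two framings are equivalent and that you arrive at exactly the same $3\times 3$ system. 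Two points where your write-up is actually more explicit than the paper's: you name the boundary reindexing $\phi_{2^\lambda}=\phi_1=\phi_{2^{\lambda-1}}$ that is needed when the shift $t\mapsto t+1$ moves one summand out of the dyadic interval (the paper uses this silently when it identifies a $t+1$-shifted sum with $a_{\lambda-1,k-2,\ell-2}$), and you spell out the identity $\frac{2x^2yz^2(1+yz)}{1-2xz(1+yz)}=\frac{xyz}{1-2xz(1+yz)}-xyz$ that absorbs the spurious $\pm 2xyz$ and brings the denominator into the stated form, where the paper only says ``slight rewriting of this identity completes the proof.'' The reindexing for~\eqref{eqn:secmomrep} and the coefficient extraction via $\frac{1}{(1-y)(1-z)}=\sum_{i,j\ge 0}y^iz^j$ for~\eqref{eqn:second_moment} and~\eqref{eqn:second_moment_2} are exactly as in the paper.
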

\begin{proof}
By definition we have
\begin{align*}
\sum_{2^\lambda\leq t<2^{\lambda+1}}c_t^2
&=
\sum_{k,\ell\geq 0}
\sum_{2^\lambda\leq t<2^{\lambda-1}}\delta(k,t)\,\delta(\ell,t)
\\&=
\sum_{0\leq k,\ell\leq \lambda+1}
\sum_{2^\lambda\leq t<2^{\lambda-1}}
\delta(\lambda+1-k,t)\,\delta(\lambda+1-\ell,t)
\\&=
\frac 1{4^\lambda}\sum_{k,\ell\leq \lambda+1}a_{\lambda,k,\ell}
.
\end{align*}
Similarly we obtain the corresponding representation for the average of $\tilde c_t^2$, which proves~\eqref{eqn:secmomrep}.
Hence~\eqref{eqn:second_moment} and~\eqref{eqn:second_moment_2} follow.

Therefore it remains to prove~\eqref{eqn:Arep}.
In addition to $a_{\lambda,k,\ell}$ we set
\[
\begin{aligned}
b_{\lambda,k,\ell}
&=
4^\lambda
\sum_{2^\lambda\leq t<2^{\lambda+1}}
\delta(\lambda+1-k,t)\,\delta(\lambda+1-\ell,t+1),
\\
c_{\lambda,k,\ell}
&=
4^\lambda
\sum_{2^\lambda\leq t<2^{\lambda+1}}
\delta(\lambda+1-k,t+1)\,\delta(\lambda+1-\ell,t).
\end{aligned}
\]
It turns out that it is possible to obtain a system of recurrences for these numbers. 
By using the fundamental recurrence relation for the densities $\delta(k,t)$ we get (for $\lambda\geq 1$)
\begin{align*}
&\hskip -1cm
\sum_{2^\lambda\leq t<2^{\lambda+1}}
\delta(\lambda+1-k,t)\,\delta(\lambda+1-\ell,t)
\\&=
\sum_{2^{\lambda-1}\leq t<2^\lambda}
\delta((\lambda-1)+1-(k-1),2t)\,\delta((\lambda-1)+1-(\ell-1),2t)
\\&\qquad+
\sum_{2^{\lambda-1}\leq t<2^\lambda}
\delta(\lambda+1-k,2t+1)\,\delta(\lambda+1-\ell,2t+1)
\\&=
\sum_{2^{\lambda-1}\leq t<2^\lambda}
\delta((\lambda-1)+1-(k-1),t)\,\delta((\lambda-1)+1-(\ell-1),t)
\\&\qquad+
\frac 14
\sum_{2^{\lambda-1}\leq t<2^\lambda}
\p{ \delta(\lambda+1-k-1,t) + \delta(\lambda+1-k+1,t+1) }
\\&\qquad\times
\p{ \delta(\lambda+1-\ell-1,t) + \delta(\lambda+1-\ell+1,t+1) }
\\&=
\sum_{2^{\lambda-1}\leq t<2^\lambda}
\delta((\lambda-1)+1-(k-1),t)\,\delta((\lambda-1)+1-(\ell-1),t)
\\&\qquad+
\frac 14
\sum_{2^{\lambda-1}\leq t<2^\lambda}
\delta(\lambda-1+1-k,t)\,\delta(\lambda-1+1-\ell,t)
\\&\qquad+
\frac 14
\sum_{2^{\lambda-1}\leq t<2^\lambda}
\delta(\lambda-1+1-(k-2),t+1)\,\delta(\lambda-1+1-(\ell-2),t+1)
\\&\qquad+
\frac 14
\sum_{2^{\lambda-1}\leq t<2^\lambda}
\delta(\lambda-1+1-k,t)\,\delta(\lambda-1+1-(\ell-2),t+1)
\\&\qquad+
\frac 14
\sum_{2^{\lambda-1}\leq t<2^\lambda}
\delta(\lambda-1+1-(k-2),t+1)\,\delta(\lambda-1+1-\ell,t),
\end{align*}
therefore
\[
a_{\lambda,k,\ell}
=
4a_{\lambda-1,k-1,\ell-1}
+
a_{\lambda-1,k,\ell}
+
a_{\lambda-1,k-2,\ell-2}
+
b_{\lambda-1,k,\ell-2}
+
c_{\lambda-1,k-2,\ell}
\]
for $\lambda\geq 1$ and $k,\ell\geq 0$.
Analogously, we have
\begin{align*}
&\hskip -2cm
\sum_{2^\lambda\leq t<2^{\lambda+1}}
\delta(\lambda+1-k,t)\,\delta(\lambda+1-\ell,t+1)
\\
&=
\sum_{2^{\lambda-1}\leq t<2^\lambda}
\delta(\lambda+1-k,2t)\,\delta(\lambda+1-\ell,2t+1)
\\
&\qquad+
\sum_{2^{\lambda-1}\leq t<2^\lambda}
\delta(\lambda+1-k,2t+1)\,\delta(\lambda+1-\ell,2t+2)
\\&=
\frac 12
\sum_{2^{\lambda-1}\leq t<2^\lambda}
\delta(\lambda+1-k,2t)\,\delta(\lambda+1-\ell-1,t)
\\&\qquad+
\frac 12
\sum_{2^{\lambda-1}\leq t<2^\lambda}
\delta(\lambda+1-k,2t)\,\delta(\lambda+1-\ell+1,t+1)
\\&\qquad+
\frac 12
\sum_{2^{\lambda-1}\leq t<2^\lambda}
\delta(\lambda+1-k-1,t)\,\delta(\lambda+1-\ell,t+1)
\\&\qquad+
\frac 12
\sum_{2^{\lambda-1}\leq t<2^\lambda}
\delta(\lambda+1-k+1,t+1)\,\delta(\lambda+1-\ell,t+1)
,
\end{align*}
which gives
\[
b_{\lambda,k,\ell}
=
2
a_{\lambda-1,k-1,\ell}
+
2
b_{\lambda-1,k-1,\ell-2}
+
2
b_{\lambda-1,k,\ell-1}
+
2
a_{\lambda-1,k-2,\ell-1}
\]
for $\lambda\geq 1$ and $k,\ell\geq 0$.
Finally, we calculate 
\begin{align*}
&\hskip -2cm
\sum_{2^\lambda\leq t<2^{\lambda+1}}
\delta(\lambda+1-k,t+1)\,\delta(\lambda+1-\ell,t)
\\&=
\sum_{2^{\lambda-1}\leq t<2^\lambda}
\delta(\lambda+1-k,2t+1)\,\delta(\lambda+1-\ell,2t)
\\&\qquad+
\sum_{2^{\lambda-1}\leq t<2^\lambda}
\delta(\lambda+1-k,2t+2)\,\delta(\lambda+1-\ell,2t+1)
\\&=
\frac 12
\sum_{2^{\lambda-1}\leq t<2^\lambda}
\delta(\lambda+1-k-1,t)\,\delta(\lambda+1-\ell,2t)
\\&\qquad+
\frac 12
\sum_{2^{\lambda-1}\leq t<2^\lambda}
\delta(\lambda+1-k+1,t+1)\,\delta(\lambda+1-\ell,2t)
\\&\qquad+
\frac 12
\sum_{2^{\lambda-1}\leq t<2^\lambda}
\delta(\lambda+1-k,2t+2)\,\delta(\lambda+1-\ell-1,t)
\\&\qquad+
\frac 12
\sum_{2^{\lambda-1}\leq t<2^\lambda}
\delta(\lambda+1-k,2t+2)\,\delta(\lambda+1-\ell+1,t+1),
\end{align*}
therefore
\[
c_{\lambda,k,\ell}
=
2
a_{\lambda-1,k,\ell-1}
+
2
c_{\lambda-1,k-2,\ell-1}
+
2
c_{\lambda-1,k-1,\ell}
+
2
a_{\lambda-1,k-1,\ell-2}
\]
for $\lambda\geq 1$ and $k,\ell\geq 0$.
By defining trivariate generating functions for
$b_{\lambda,k,\ell}$ and $c_{\lambda,k,\ell}$,
\begin{align*}
B(x,y,z)
&=
\sum_{\lambda,k,\ell\geq 0}
b_{\lambda,k,\ell}x^\lambda y^k z^\ell
,
\\
C(x,y,z)
&=
\sum_{\lambda,k,\ell\geq 0}
c_{\lambda,k,\ell}x^\lambda y^k z^\ell,
\end{align*}
the above recurrences translate into the following relations:
\[
\begin{aligned}
A(x,y,z)
&=
X
+
4xyz
A(x,y,z)
+
x(1+y^2z^2)A(x,y,z)
+
xz^2B(x,y,z)
+xy^2C(x,y,z),
\\
B(x,y,z)
&=
X
+
2x(y+y^2z)A(x,y,z)
+
2x(yz^2+z)B(x,y,z),
\\
C(x,y,z)
&=
X
+
2x(z+yz^2)A(x,y,z)
+
2x(y^2z+y)C(x,y,z)
,
\end{aligned}
\]
where
\begin{align*}
X&=
\sum_{k,\ell\geq 0}
a_{0,k,\ell}x^0 y^k z^\ell \\
&=
\sum_{k,\ell\geq 0}
\delta(1-k,1)\,\delta(1-\ell,1)x^0y^kz^\ell
\\
&=
\sum_{k\geq 0}2^{-1-k}y^k
\sum_{\ell\geq 0}2^{-1-\ell}z^\ell
=
\frac 1{2-y}\frac 1{2-z}
.
\end{align*}
(We note that
$a_{0,k,\ell}=b_{0,k,\ell}=c_{0,k,\ell}$.)
The equations for $B$ and $C$ can be written in the form
\[
B(x,y,z)
=
\frac{X+2xy(1+yz)A(x,y,z)}{1-2xz(1+yz)}
\]
and 
\[
C(x,y,z)
=
\frac{X+2xz(1+yz)A(x,y,z)}{1-2xy(1+yz)}
\]
respectively.
By inserting these two identities into the first equation we get
\begin{multline*}
A(x,y,z)\p{1-4xyz-x(1+y^2z^2)-xz^2\frac{2xy(1+yz)}{1-2xz(1+yz)}-xy^2\frac{2xz(1+yz)}{1-2xy(1+yz)}}
\\
=
X\p{1+\frac {xz^2}{1-2xz(1+yz)}+\frac{xy^2}{1-2xy(1+yz)}}.
\end{multline*}
Slight rewriting of this identity completes the proof of the lemma.
\end{proof}

In the next section we will determine the first terms of asymptotic expansions of the diagonal sequences~\eqref{eqn:second_moment} and~\eqref{eqn:second_moment_2}.
\subsection{Asymptotic expansion of the second moment of $c_t$}
The purpose of this section is to prove the following proposition on certain diagonals of 
\[
F(x,y,z) = A(x,y,z)/((1-y)(1-z)).
\]
\begin{proposition}\label{prp:diag_asymp}
As $n\to\infty$, we have
\[
\frac 1{8^n}[x^ny^{n+1}z^{n+1}]\, F(x,y,z) = 
\frac 1{4} + \frac 1{2\sqrt{\pi}} \frac 1{\sqrt n} 
      +  \frac 1{4\pi} \frac 1n + O(n^{-3/2})
\]
and
\[
\frac 1{8^n}[x^ny^nz^n]\, F(x,y,z) =
\frac 14 - \frac 1{2\sqrt{\pi}} \frac 1{\sqrt n} 
      +  \frac 1{4\pi} \frac 1n + O(n^{-3/2}).
\]
\end{proposition}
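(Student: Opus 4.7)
The plan is to combine Cauchy's integral formula with a two-dimensional saddle-point analysis adapted to the ``pinched'' singular geometry created by the factor $1/((1-y)(1-z))$ in $F$. First I would rescale $x\mapsto u/8$ to absorb the factor $8^{-n}$ and write, for $s\in\{0,1\}$,
\[
\frac{1}{8^n}\,[x^n y^{n+s} z^{n+s}] F(x,y,z)
= \frac{1}{(2\pi i)^3}\oint\oint\oint
\frac{F(u/8,y,z)\, du\, dy\, dz}{u^{n+1}\, y^{n+s+1}\, z^{n+s+1}},
\]
and evaluate the $u$-integral by picking up the residue at the root $u=u_0(y,z)$ of the denominator of $F(u/8,y,z)$ (viewed in $u$) closest to the origin. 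This reduces the problem to the double Cauchy integral
\[
\frac{1}{(2\pi i)^2}\oint\oint
\frac{R(y,z)\, dy\, dz}{u_0(y,z)^{n+1}\,(1-y)(1-z)\, y^{n+s+1} z^{n+s+1}},
\]
with $R$ and $u_0$ rational and analytic at $(y,z)=(1,1)$. A direct check using \eqref{eqn:Arep} confirms that $(x,y,z)=(1/8,1,1)$ lies on the singular variety of $F$, so $u_0(1,1)=1$.

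Next I would carry out the local asymptotic analysis near $(y,z)=(1,1)$, which is simultaneously the saddle point in the diagonal direction and the location of the pinching poles $y=1$ and $z=1$. The $(y,z)$-symmetry of $A$ (hence of $F$, $R$, $u_0$) forces the critical point to lie on the diagonal; the first and second order Taylor coefficients of $u_0$ and $R$ at $(1,1)$ can be extracted by implicit differentiation of $Q(u_0(y,z),y,z)=0$, where $Q$ denotes the denominator polynomial of $F(u/8,y,z)$ in $u$. After the rescaling $y=1+\eta/\sqrt n$, $z=1+\zeta/\sqrt n$, the factor $u_0^{-n-1}(yz)^{-n-s-1}$ becomes a Gaussian $\exp\bigl(-\tfrac12\langle(\eta,\zeta),H(\eta,\zeta)\rangle\bigr)$ with positive definite Hessian $H$, while $1/((1-y)(1-z))$ rescales to $n/(\eta\zeta)$, creating two simple poles pinched against the deformed contours. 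The contribution then splits cleanly: the residue at $y=z=1$ yields the leading constant $1/4$; the cross term between the $1/(\eta\zeta)$ poles and the Gaussian produces a half-line contribution of order $n^{-1/2}$; and the smooth part of the integrand, together with the next order expansions of $R$ and $\log u_0$, produces the $n^{-1}$ term. The sign flip between $s=0$ and $s=1$ in the $n^{-1/2}$ coefficient reflects the asymmetry of the boundary contribution: the substitution $s\mapsto s+1$ multiplies the integrand by $(yz)^{-1}$, which alters the way the deformed contour passes the pinched poles $y=1,z=1$, and this changes the sign of the ``half-residue'' terms of order $n^{-1/2}$ while leaving the regular $n^{-1}$ contribution unchanged.

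The main obstacle is precisely this coincidence of the critical point with the pinching poles: the configuration is not a classical smooth-point situation covered by the standard ACSV formula, but a ``multiple point'' where three smooth sheets of the singular variety meet transversally. I would either invoke the multiple-point results of Pemantle and Wilson, or perform the contour deformation and local saddle-point analysis by hand, and in either case compute the second order Taylor coefficients of $u_0$ and $R$ at $(1,1)$ with sufficient precision to capture the $n^{-1}$ term. A small error at this order spoils the $n^{-1}$ asymptotic and could produce a spurious $n^{-1/2}$ contribution, so the computation must be carried out with care, likely with computer algebra support given the complexity of the rational functions involved.
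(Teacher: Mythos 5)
Your strategy is essentially the paper's: extract the $x$-coefficient via the polar singularity $x=f(y,z)$ of $F$, solved by the implicit function theorem and Taylor-expanded at $(1,1)$ (Lemma~\ref{lem:xexp}), then apply a double Cauchy integral in $(y,z)$ with contours that pass the pinched poles at $y=z=1$ on one side, rescale $y=1+is/\sqrt n$, $z=1+it/\sqrt n$, and evaluate by a saddle-point computation whose crucial ingredient is the half-residue integral of Lemma~\ref{lem:integral} (worth $-\pi i$ per variable, giving the leading constant $1/4$ after the $(2\pi i)^{-2}$ normalisation), with all Taylor data obtained by computer algebra, exactly as you anticipate. One small correction to your mechanism for the sign flip: the contour is identical for the two diagonals, so it is not that the deformed contour passes the poles differently; rather, the $[x^n y^{n+1} z^{n+1}]$ integrand carries an extra factor $1/(yz)$, whose linear expansion $1 - i(s+t)/\sqrt n + \cdots$ couples with the pole $1/(st)$ and the Gaussian to flip the sign of the $n^{-1/2}$ term relative to the $[x^n y^n z^n]$ case, while leaving the $n^{-1}$ coefficient unchanged.
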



Before discussing the proof, let us say a few words about how this result was
discovered. The most direct approach would have been to use the algorithmic
theory of Pemantle and Wilson~\cite{PW2013}, by which, in principle, it is
possible to derive the expansion automatically. However, the rational function
$F(x,y,z)$ turns out to be a case where the (automatic) machinery runs into a
limit case which has to be treated separately; the {\it generic form} of the
asymptotic expansion usually fails here. Instead, we have obtained the terms of
the expansion by an experimental approach.

It is well-known~\cite{L88} that the diagonal of a rational function in
several variables is D-finite, i.e., the coefficients $[x^ny^nz^n]\, F(x,y,z)$
satisfy a certain linear recurrence equation with polynomial coefficients,
and their generating function 
\[
  D(t) := \sum_{n=0}^\infty \bigl([x^ny^nz^n]\, F(x,y,z)\bigr)t^n
\]
must satisfy a linear differential equation with polynomial coefficients. It is
also well-known that these equations can be constructed by computer algebra
using the technique of creative telescoping~\cite{BCHP2011}. We will see later
in Section~\ref{sec:5.2} in the case of a bivariate rational function how this
works.

In the present case, where we have three variables, the computation of a
certified differential equation for the diagonal series is also feasible, but
quite costly. It requires far less computation time to compute the first $951$
diagonal coefficients $[x^ny^nz^n]\, F(x,y,z)$ ($n=0,\dots,950$) and recover a
(conjectured) recurrence and differential equation from these via automated
guessing~\cite{Ka2009}. Doing so, we detected a linear recurrence of order~24
with polynomial coefficients of order~190 and a linear differential equation of
order~11 with polynomial coefficients of degree~220 for the diagonal series.
The equations are somewhat too lengthy to be reproduced here. They are posted on
our website~\cite{Kweb}. The computation only implies that these equations match
the first 951 terms, but it is extremely unlikely that they do not also match
the remaining terms. Although not formally proven, it is fair to assume that the
equations are correct.

Next, we determined a fundamental system of generalized series solutions of the
guessed recurrence operator. In general, such solutions have the form
\[
  n!^\gamma \rho^n \exp\bigl(p(n^{1/s}) \bigr) n^\alpha a\bigl(n^{-1/s},\log(n)\bigr)
\]
for some $\gamma\in\dQ$, $\rho,\alpha\in\dC$, $s\in\dN$ and $a\in\dC[[x]][y]$,
and $\gamma,\rho,\alpha,s$ and any finite number of terms of $a$ can be computed
algorithmically from the recurrence~\cite{WZ85,Ka2011}. In the present
situation, there are 24 linearly independent series solutions of the form
$\rho^n n^\alpha a(n^{-1})$. To get an idea how the program finds them, consider
the toy example recurrence
\[
  2(n+1)f_{n+2} - (4n+3)f_{n+1} - 2(2n+1)f_n = 0.
\]
Assume that $f_n=\rho^n n^\alpha(1+\frac cn+\cdots)$, where $\rho,\alpha,c$ are unknown
constants and $\cdots$ stands for lower order terms that we are not interested in.
We then have
\begin{alignat*}1
  f_{n+1} &= \rho^{n+1} (n+1)^\alpha\Bigl(1+\frac c{n+1}+\cdots\Bigr)\\
  &= \rho \rho^n n^\alpha \Bigl(1+\frac1n\Bigr)^\alpha\Bigl(1+\frac cn \Bigl(1+\frac1n\Bigr)^{-1}  + \cdots\Bigr)\\
  &= \rho \rho^n n^\alpha \sum_{k=0}^\infty\binom\alpha k n^{-k}\biggl(1+\frac cn \sum_{k=0}^\infty \binom{-1}k n^{-k} + \cdots\biggr)\\
  &= \rho \rho^n n^\alpha \Bigl(1 + \frac{\alpha + c}n + \cdots\Bigr),
\end{alignat*}
and, by a similar calculation, $f_{n+2}=\rho^2 \rho^n n^\alpha (1 + \frac{2\alpha + c}n + \cdots)$.
Plugging these two forms into the recurrence, we obtain the requirement
\[
\Bigl((n+1) \rho^2 \Bigl(1 + \frac{2\alpha + c}n + \cdots\Bigr) - (2n+2)\rho\Bigl(1 + \frac{\alpha + c}n+\cdots\Bigr) - (2n+1) (1+\cdots)\Bigr)\rho^n n^\alpha
\stackrel!=0,
\]
where ``$\stackrel!=0$'' is to be read as ``should be zero''.
After dividing out $\rho^n n^{\alpha+1}$ and rearranging terms, the equation becomes
\[
  \Bigl(2\rho^2-4\rho-4\Bigr)1 + \Bigl((4\alpha+2)\rho^2-(4\alpha+3)\rho-2+c(2\rho^2-4\rho-4)\Bigr)\frac1n + \cdots \stackrel!=0.
\]
The left hand side is a formal power series in~$n^{-1}$, which is zero if
and only if all its coefficients are zero. Comparing the first coefficient to
zero gives $\rho=1\pm\sqrt 3$. For either of these two choices, the second
coefficient simplifies to $3\pm\sqrt 3+(12\pm4\sqrt 3)\alpha$, which is equal to
zero iff $\alpha=-1/4$. Because of cancellations, we do not obtain any
information about~$c$, but it turns out that if we had started with an ansatz
$\rho^nn^\alpha(1+\frac{c_1}n+\frac{c_2}{n^2}+\cdots+\frac{c_k}{n^k}+\cdots)$
with variables $\rho,\alpha,c_1,\dots,c_k$, we would have obtained enough
constraints to determine not only $\rho$ and $\alpha$ but also
$c_1,\dots,c_{k-1}$.

The result in any case is a set of truncated formal series solutions of the
given recurrence. Although these series are only formal solutions by
construction, experience shows that they often can be viewed as asymptotic
expansions of actual sequence solutions. We therefore had reasons to hope that
the asymptotic behaviour of the diagonal sequence $[x^ny^nz^n]\, F(x,y,z)$ could
be written as some linear combination of the series we found. Of course, only
those solutions for which $|\rho|$ is maximal can contribute to the asymptotic
behaviour. In many other cases, there is just one such maximal solution, and so
to get the asymptotic behaviour of a sequence $(a_n)_{n=0}^\infty$ under
consideration, it only remains to find a constant $c$ such that $a_n\sim
c\rho^nn^\alpha$. In the present case, however, it turns out that there are
three distinct solutions with maximal~$\rho$, their dominant terms are $8^n$,
$8^nn^{-1/2}$, $8^nn^{-1}$, respectively. We were therefore led to expect that
\[
  [x^ny^nz^n]\, F(x,y,z) \sim
  c_1 8^n + c_2 8^n n^{-1/2} + c_3 8^n n^{-1}
\]
for certain constants $c_1,c_2,c_3$. Approximate values for these constants can
be obtained by evaluating both sides for some large value of~$n$, reading $\sim$
as $=$ and solving the resulting linear system numerically. It was not hard to
recognize the numeric solution as $c_1\approx 1/4$, $c_2\approx 1/(2\sqrt\pi)$,
$c_3\approx 1/(4\pi)$.

At this point, we knew what we wanted to prove. Unfortunately, there is no
immediate way to turn the experimental reasoning into a rigorous proof. As far
as the recurrence and the differential equation are concerned, their correctness
can be proved using creative telescoping, and we did so using Koutschan's
package~\cite{Ko2010}, obtaining a certificate of about 80Mb length. But the theory~\cite{BT1932}
that connects formal generalized series solutions to asymptotic expansions rests
on shaky grounds, and although it is generally believed to be valid, it cannot
be accepted as proved at this point. It is even less clear how to get a direct
proof for the correctness of the guessed constants $c_1,c_2,c_3$ using only the
recurrence and the initial values.

For these reasons, we now give a proof that is independent of how we found the
result in the first place. 

For the sake of brevity we write $F(x,y,z)$ as
\[
F(x,y,z) = \frac 1{(1-y)(1-z)} \frac{G(x,y,z)}{H(x,y,z)},
\]
where
\[
G(x,y,z) = \frac{1+\frac {xz^2}{1-2xz(1+yz)}+\frac{xy^2}{1-2xy(1+yz)}}{(2-y)(2-z)}
\]
and
\[
H(x,y,z) = 1-x(1+yz)^2-\frac{xyz}{1-2xz(1+yz)}-\frac{xyz}{1-2xy(1+yz)}.
\]
The idea of the proof is to first extract the coefficient $[x^{n-1}]\, F(x,y,z)$---which turns out to be easy because we just have a polar singularity in $x$---and then to apply Cauchy's integral formula in two variables and a saddle point method in order to obtain the coefficient $[x^{n-1}y^nz^n]\, F(x,y,z) = [y^nz^n]\,[x^{n-1}]\,F(x,y,z)$.
The following lemma reduces the problem to two variables.
(We denote the open disk with radius $\delta$ around $a\in\dC$ by $B_\delta(a)$.)
\begin{lemma}\label{lem:xexp}
There exist $\delta ,\delta_1,\varepsilon>0 $ and a unique smooth function $f:B_\delta(1)\times B_\delta(1)\ra\dC$ such that $f(1,1) = 1/8$ and
\[
H(f(y,z),y,z) = 0
\]
for $|y-1|<\delta$ and $|z-1|<\delta$, such that
\begin{equation}\label{eqxexp}
[x^{n-1}]\, F(x,y,z) = \frac 1{(1-y)(1-z)}\left( \frac{-G(f(y,z),y,z)}{H_x(f(y,z),y,z)} f(y,z)^{-n} + O\bigl(8^{(1-\varepsilon)n}\bigr) \right)
\end{equation}
uniformly for $|y-1|<\delta$ and $|z-1|<\delta$, and such that 
\begin{equation}\label{eqxexp2}
[x^{n-1}]\, F(x,y,z) = O(8^{(1-\varepsilon)n}) 
\end{equation}
uniformly for all $y,z$ satisfying
$|y| \leq 1 + \delta_1$, $|z| \leq 1 + \delta_1$ and 
$(|y-1| \geq \delta$ or $|z-1| \ge \delta)$.
Furthermore we have the local expansions
\begin{align*}
f(y,z) &= \frac 18-\frac 18(y-1)-\frac 18(z-1)+\frac 3{32}(y-1)^2+\frac 3{32}(z-1)^2+\frac 18(y-1)(z-1)\\
&-\frac 1{16}(y-1)^3-\frac 1{16}(z-1)^3-\frac 3{32}(y-1)^2(z-1)-\frac 3{32}(y-1)(z-1)^2\\
&+\frac 5{128}(y-1)^4+\frac 5{128}(z-1)^4+\frac 1{16}(y-1)^3(z-1)+\frac 1{16}(y-1)(z-1)^3\\
&+\frac {13}{192}(y-1)^2(z-1)^2+O\bigl(\abs{y-1}^5+\abs{z-1}^5\bigr)
\end{align*}
and
\begin{align*}
\log f(y,z) &= -\log 8 - (y-1)-(z-1) + \frac 14(y-1)^2 + \frac 14(z-1)^2 \\
&- \frac 1{12}(y-1)^3 - \frac 1{12}(z-1)^3
+\frac 1{32}(y-1)^4+\frac 1{32}(z-1)^4\\
&-\frac 1{48}(y-1)^2(z-1)^2
+ O\bigl(\abs{y-1}^5+\abs{z-1}^5\bigr)
\end{align*}
 at $(1,1)\in\dC^2$.
\end{lemma}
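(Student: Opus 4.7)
My plan is to view $F$ as a proper rational function in $x$ with coefficients analytic in $(y,z)$, identify its dominant singularity via the implicit function theorem, extract the associated residue by partial fractions, and bound the contributions of the subdominant singularities separately.

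First I would clear the nested fractions in $F$ by multiplying numerator and denominator by the common factor $(1-2xz(1+yz))(1-2xy(1+yz))$, obtaining
\[
F(x,y,z) = \frac{\tilde G(x,y,z)}{(1-y)(1-z)(2-y)(2-z)\,\tilde H(x,y,z)},
\]
where $\tilde H$ is polynomial of degree three in~$x$. A direct computation gives $\tilde H(x,1,1) = (1-2x)(1-4x)(1-8x)$, so $x=1/8$ is a simple root of $\tilde H(\cdot,1,1)$. The implicit function theorem then produces, for sufficiently small $\delta>0$, a unique smooth branch $f:B_\delta(1)\times B_\delta(1)\to\dC$ with $\tilde H(f(y,z),y,z)=0$ and $f(1,1)=1/8$, together with analogous smooth branches $g$ and $h$ near~$1/4$ and~$1/2$. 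With the three branches distinct on the neighborhood, the partial fraction decomposition
\[
F(x,y,z) = \frac{R_1(y,z)}{x-f(y,z)} + \frac{R_2(y,z)}{x-g(y,z)} + \frac{R_3(y,z)}{x-h(y,z)}
\]
has smooth residues, and the simple-root formula gives $R_1(y,z) = G(f,y,z)/[(1-y)(1-z)\,H_x(f,y,z)]$. Extracting the coefficient of $x^{n-1}$ termwise then yields~\eqref{eqxexp}, with the contributions from $g$ and $h$ absorbed into the error $O(8^{(1-\varepsilon)n})$ since $|g|$ and $|h|$ uniformly exceed $|f|$ on the neighborhood.

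To establish~\eqref{eqxexp2}, I would apply the Cauchy integral formula on a circle $|x|=r$ with $r$ slightly greater than $1/8$. The crux is to show that every root of $\tilde H(\cdot,y,z)$ has modulus strictly greater than a fixed $\rho_0>1/8$ uniformly on the compact ``away'' region, and this is the main obstacle. The plan is to combine: (a) the triangle-inequality bound $|[x^m]\,A(x,y,z)|\le [x^m]\,A(x,|y|,|z|)$, which by the nonnegativity of the coefficients of $A$ and Pringsheim's theorem shows that every $x$-singularity of $A(\cdot,y,z)$ has modulus at least the smallest positive real root of $\tilde H(\cdot,|y|,|z|)$; (b) a compactness and continuity argument on the unit torus $|y|=|z|=1$ upgrading the pointwise inequality $|f(y,z)|\ge 1/8$ (with equality only at $(1,1)$) to a uniform gap $|f(y,z)|\ge 1/8+\eta$ outside the $\delta$-neighborhood; and (c) a perturbative extension, valid for $\delta_1$ sufficiently small relative to $\delta$ and $\eta$, to the polydisk $|y|,|z|\le 1+\delta_1$.

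Finally, the Taylor expansions of $f$ and $\log f$ at $(1,1)$ follow by repeated implicit differentiation of $\tilde H(f,y,z)=0$, a mechanical calculation easily verified by computer algebra. I expect the uniform decay step to be the main technical obstacle, since it rests on a delicate positivity-plus-compactness argument rather than on direct algebraic manipulation of~$\tilde H$.
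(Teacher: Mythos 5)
Your overall architecture mirrors the paper's: use the implicit function theorem at $(1/8,1,1)$ to define $f$, extract $[x^{n-1}]$ via the dominant simple pole for $(y,z)$ near $(1,1)$, and obtain a uniform exponential improvement away from $(1,1)$. Clearing denominators to obtain the cubic $\tilde H$ is a perfectly good variant (and in fact a bit cleaner than the paper, which loosely asserts $H(\cdot,y,z)$ has ``at most two'' zeros while the cleared cubic genuinely has three near $1/8,1/4,1/2$), and the partial-fraction route to~\eqref{eqxexp} is sound, with the subdominant residues near $1/4$ and $1/2$ contributing $O(4^n)$ and $O(2^n)$ respectively.

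The genuine gap is in the argument for~\eqref{eqxexp2}. Your step~(a) --- nonnegativity of the coefficients $a_{\lambda,k,\ell}$ plus $|[x^m]A(x,y,z)|\le[x^m]A(x,|y|,|z|)$ and Pringsheim --- only shows that the radius of convergence of $A(\cdot,y,z)$ is at least that of $A(\cdot,|y|,|z|)$, and on the torus $|y|=|z|=1$ the right-hand side is $A(\cdot,1,1)$, whose radius is exactly $1/8$. So (a) delivers only the non-strict bound and gives no exponential gain whatsoever. The parenthetical ``with equality only at $(1,1)$'' in step~(b) is precisely what needs to be proved, and it does not follow from (a). The paper's proof fills this hole by writing $H=1-P$ where $P$ is a power series in $x,y,z$ with \emph{nonnegative} coefficients (this is a different positivity than that of $A$!), so that $|P(x,y,z)|\le 1$ on the closed polydisk $|x|\le 1/8$, $|y|\le 1$, $|z|\le 1$, with equality forcing every monomial to be a nonnegative real, hence $x=1/8$, $y=z=1$. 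This gives $H(x,y,z)\neq 0$ on that polydisk except at the corner, and the compactness step you sketch in~(c) then goes through. You should replace your step~(a) with this triangle-inequality argument on $P$ in the denominator; as written, your plan does not prove the strict separation on which steps~(b) and~(c) rely.
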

\begin{proof}
Since $H(1/8,1,1) = 0$ and $H_x(1/8,1,1) = -12$ it follows from the implicit function theorem that there is some $\delta>0$ and a unique analytic function $f:B_{\delta}\times B_{\delta}\ra\dC$ such that $H(f(y,z),y,z)=0$ and $f(1,1)=1/8$.
Furthermore it is a tedious exercise in implicit differentiation (a task that we assigned to a computer algebra system) to derive the Taylor approximation of $f(y,z)$ at $(1,1)$.
The local expansion of $\log f(y,z)$ follows from this by using the series expansion of the logarithm.
A Sage worksheet for computing these Taylor polynomials is available on our website~\cite{Kweb}.
In order to prove the asymptotic formulas~\eqref{eqxexp} and~\eqref{eqxexp2}, we study the zeros of the function $x\mapsto H(x,y,z)$.
For each $y$ and $z$ there are at most two of them, and for $(y,z)=(1,1)$ we have the zeros $1/2$ and $1/8$.
Moreover they depend in a continuous way on $y$ and $z$.
Therefore for some $\mu,\varepsilon>0$ and for all $0<|y-1|<\delta$ and $0<|z-1|<\delta$ the complex number $f(x,y)$ is the only singularity of $x\mapsto F(x,y,z)$ of absolute value $|x|\leq \abs{f(y,z)}+\mu$, and it is a polar singularity of order~$1$.
Using standard arguments for obtaining asymptotics of meromorphic functions (for example,~\cite[chapter 3]{PW2013} gives a concise summary on univariate asymptotics), and a continuity argument in order to obtain uniformity in $y$ and $z$ of the error estimate, we obtain~\eqref{eqxexp}.
We want to prove~\eqref{eqxexp2}.
Note that the denominator $H(x,y,z)$ has the form $H(x,y,z) = 1 - P(x,y,z)$, where $P(x,y,z)$ considered as power series in $x,y,z$ has only nonnegative coefficients.
By the triangle inequality it follows that $\abs{P(x,y,z)}\leq 1$ if $|x|\leq 1/8$, $|y|\leq 1$ and $|z|\leq 1$, and that $\abs{P(x,y,z)}<1$ if at least one of the inequalities is strict.
Assume now that $|x|=1/8$ and $|y|=|z|=1$.
Then $P(x,y,z)=1$ can occur only if all summands of the power series $P(x,y,z)=\sum_{i,j,k\geq 0}a_{i,j,k}x^iy^jz^k$ are nonnegative reals;
taking a closer look at $P$ we see that this can only be the case if $x=1/8$ and $y=z=1$.
Thus it follows that $H(x,y,z) \ne 0$ if $|x|\leq 1/8$, $|y|\leq 1$, $|z|\leq 1$ but $y\neq 1$ or $z\neq 1$.
By a continuity-compactness argument this implies that there exist $\mu>0,\delta_1>0$, and $\delta>0$ such that $|H(x,y,z)| \geq \mu $ for $|x|\leq 1/8+\delta_1$, $|y| \leq 1 + \delta_1$, $|z| \leq 1 + \delta_1$, but $|y-1| \geq \delta$ or $|z-1| \geq \delta$.
In particular we obtain~\eqref{eqxexp2}.
\end{proof}

The next lemma will be needed for computing the asymptotic expansion of the coefficients $[y^n z^n]$.
\begin{lemma}\label{lem:integral}
We have 
\[
\int_{-\infty, \Im(s)> 0}^\infty e^{-s^2/4} \frac{\mathrm d s}s = - \pi i,
\]
and
\[
\int_{-\infty}^\infty e^{-s^2/4}s^k\mathrm d s
=\begin{cases}
2\sqrt{\pi},&k=0,\\
4\sqrt{\pi},&k=2,\\
0,&k\geq 1\mbox{ odd.}
\end{cases}
\]
\end{lemma}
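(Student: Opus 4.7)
The proof naturally splits along the two displays.

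For the real integrals (second display), the strategy is a trivial change of variable $u=s/2$, which transforms the integral into $2^{k+1}\int_{-\infty}^{\infty}u^{k}e^{-u^{2}}\,du$. For $k$ odd the integrand is odd and the integral vanishes. For $k=0$ and $k=2$ the values follow from the classical Gaussian moments $\int_{-\infty}^{\infty}e^{-u^{2}}\,du=\sqrt{\pi}$ and $\int_{-\infty}^{\infty}u^{2}e^{-u^{2}}\,du=\sqrt{\pi}/2$, giving $2\sqrt{\pi}$ and $4\sqrt{\pi}$ respectively. This half of the lemma is entirely mechanical and needs no further comment.

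For the first integral I would proceed by contour deformation. The integrand $e^{-s^{2}/4}/s$ is meromorphic on $\dC$, entire apart from a simple pole at $s=0$ with residue~$1$. I interpret the notation $\int_{-\infty,\,\Im(s)>0}^{\infty}$ as the limit, as $R\to\infty$ and $\epsilon\to 0$, of the integral along the contour $C^{R}_{\epsilon}$ that runs from $-R$ to $-\epsilon$ on the real axis, then along the upper semicircle $\{\epsilon e^{i\theta}:\theta\in[0,\pi]\}$ traversed from $\theta=\pi$ down to $\theta=0$, and finally from $\epsilon$ to $R$ on the real axis.

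The plan is to split this contour into its three pieces. The two real segments combine, by the oddness of $s\mapsto e^{-s^{2}/4}/s$, to the principal value $\mathrm{PV}\int_{-\infty}^{\infty}e^{-s^{2}/4}\,ds/s$, which equals zero. On the indenting semicircle one parametrizes $s=\epsilon e^{i\theta}$ and computes
\[
\int_{\pi}^{0}\frac{e^{-\epsilon^{2}e^{2i\theta}/4}}{\epsilon e^{i\theta}}\,i\epsilon e^{i\theta}\,d\theta
=\int_{\pi}^{0}i\,e^{-\epsilon^{2}e^{2i\theta}/4}\,d\theta,
\]
which by dominated convergence tends to $-i\pi$ as $\epsilon\to 0$. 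Summing the two contributions yields $-i\pi$, as claimed. The only mildly subtle point is the interpretation of the contour and the standard Sokhotski--Plemelj-type separation into a principal value and a half-residue; no genuine obstacle arises.
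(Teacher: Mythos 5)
Your proof is correct, but it takes a slightly different route from the paper's. For the first integral the paper exploits the symmetry $s\mapsto -s$: under this substitution, $I$ equals the integral along the reflected contour (running from $+\infty$ to $-\infty$ \emph{below} the pole), and concatenating the original contour with the reflected one gives a closed curve encircling the origin clockwise, so that the residue theorem yields $2I=-2\pi i$ directly. Your approach instead decomposes the contour into the principal value along the real axis (which vanishes by oddness of the integrand) plus the indenting semicircle, and computes the half-residue $-\pi i$ from the semicircle by the standard Sokhotski--Plemelj type limit. Both arguments are sound; the paper's symmetry trick avoids parametrizing the small arc and doesn't need to appeal to the vanishing of a principal value, while yours is the more widely recognizable ``half-residue'' computation and makes the role of the pole at $0$ more visible. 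Your treatment of the Gaussian moments in the second display agrees with the paper's, which simply cites the standard values and oddness.
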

\begin{proof}
Set 
\[
I = \int_{-\infty, \Im(s)> 0}^\infty e^{-s^2/4} \frac{\mathrm d s}s.
\]
By substituting $s$ by $-s$ it follows that
\[
I = \int_{\infty, \Im(s)< 0}^{-\infty} e^{-s^2/4} \frac{\mathrm d s}s.
\]
Hence, concatenating these two integrals, we encircle the origin in a clockwise direction so that the residue theorem implies
\[ 
I + I = -2\pi i.
\]
Consequently, $I = -\pi i$.
The integrals for $k=0$ and $k=2$ are standard Gaussian integrals, and for odd $k\geq 1$ the integrand is an odd function.
\end{proof}

In order to determine the coefficient $[y^n z^n]$ we use Cauchy integration,
\[
[x^n y^n z^n]\, F(x,y,z) = \frac 1{(2\pi i)^2} \iint\limits_{\gamma\times \gamma} 
[x^n]\, F(x,y,z) \frac{\mathrm dy}{y^{n+1}} \frac{\mathrm dz}{z^{n+1}},
\]	
where the contour of integration $\gamma$ consists of two pieces: a part $\gamma_1$ inside the disk of radius $\delta$ around $1$, which connects the points $1-i\delta$ and $1+i\delta$ and passes $1$ on the left hand side; and a part $\gamma_2$, which is just a circular arc around $0$ connecting the points $1\pm i\delta$.

By~\eqref{eqxexp} and~\eqref{eqxexp2} we can replace $\gamma$ by $\gamma_1$, obtaining
\begin{multline*}
[x^n y^n z^n]\, F(x,y,z)
= O\bigl(8^{(1-\varepsilon)n}\bigr)\\
+ \frac 1{(2\pi i)^2} \iint\limits_{\gamma_1\times \gamma_1}
\frac 1{(1-y)(1-z)} \frac{-G(f(y,z),y,z)}{yzH_x(f(y,z),y,z)} \bigl(f(y,z)yz\bigr)^{-n} \mathrm dy\,\mathrm dz.
\end{multline*}
For $y,z\in \gamma_1$ we set 
\[
y = 1 + i \frac s{\sqrt n} \quad \mbox{and}\quad z = 1 + i \frac t{\sqrt n}
\]
and obtain after this substitution
\[
[x^n y^n z^n]\, F(x,y,z) = \frac 1{(2\pi i)^2} \iint\limits_{|s|,|t|\le \delta \sqrt n, \Im(s),\Im(t)> 0} 
P_n(s,t) e^{-n\, g_n(s,t)} \frac{\mathrm ds\,\mathrm dt}{st} + O\bigl(8^{(1-\varepsilon)n}\bigr),
\]
where 
\[
P_n(s,t) = \left.\frac{-G(f(y,z),y,z)}{yz H_x(f(y,z),y,z)}\right|_{y = 1 + i s/{\sqrt n},\, z = 1 + i t/{\sqrt n} }
\]
and
\[
g_n(s,t) = \left.\left( \log f(y,z) + \log y + \log z \right)\right|_{y = 1 + i s/{\sqrt n},\, z = 1 + i t/{\sqrt n} }.
\]
Using the Taylor expansion of $f(x,y)$ and a computer algebra system, we obtain
\begin{align*}
\frac{-G(f(y,z),y,z)}{yz H_x(f(y,z),y,z)} &= \frac 18 - \frac 18(y-1) - \frac 18(z-1) + \frac 7{32}(y-1)^2+\frac 7{32}(z-1)^2\\
&+\frac 18(y-1)(z-1)+
O\bigl(\abs{y-1}^3+\abs{z-1}^3\bigr),
\end{align*}
from which it follows that
\[
P_n(s,t) = \frac 18\left(1 - \frac {is}{\sqrt n} -\frac {it}{\sqrt n}
-\frac{7s^2}{4n}-\frac{7t^2}{4n}-\frac{st}{n}
+O\left( \frac{|s|^3+|t|^3}{n^{3/2}} \right)\right).
\]
Lemma~\ref{lem:xexp} implies
\begin{align*}
\log f(y,z) + \log y + \log z &= - \log 8 - \frac 14 (y-1)^2 - \frac 14 (z-1)^2+ \frac 1{4} (y-1)^3 + \frac 1{4} (z-1)^3\\
&-\frac 7{32}(y-1)^4-\frac 7{32}(y-1)^4-\frac 1{48}(y-1)^2(z-1)^2\\
&+ O\bigl(|y-1|^5+|z-1|^5\bigr),
\end{align*}
so that 
\begin{align*}
-n\, g_n(s,t)
&= \log 8^n - \frac{s^2}4 - \frac{t^2}4 + i \frac{s^3}{4 \sqrt n} + i \frac{t^3}{4 \sqrt n}\\
&+ \frac {7s^4}{32n} +\frac {7t^4}{32n}+\frac{s^2t^2}{48n}+
O\left( \frac{|s|^5+|t|^5}{n^{3/2}} \right)
\end{align*}
and therefore, using the expansion $e^x=1+x+x^2/2+O(x^3)$ at $x=0$,
\begin{align*}
e^{-n\, g_n(s,t) } &= 8^n e^{- \frac{s^2}4 - \frac{t^2}4 } 
\left( 1 + i \frac{s^3}{4 \sqrt n} + i \frac{t^3}{4 \sqrt n}
+ \frac {7s^4}{32n} +\frac {7t^4}{32n}
+\frac{s^2t^2}{48n}
\right.
\\
&
\left.
-\frac{s^6+t^6}{32n}
-\frac{s^3t^3}{16n}
+O\left( \frac{|s|^5+|s|^7+|t|^5+|t|^7}{n^{3/2}}\right)
\right)
\end{align*}
for $|s|\leq \delta\sqrt{n}$ and $|t|\leq \delta\sqrt{n}$.
This leads to
\begin{align*}
& \frac 1{(2\pi i)^2} \iint\limits_{|s|,|t|\le \delta \sqrt n, \Im(s),\Im(t)> 0} 
P_n(s,t) e^{-n\, g_n(s,t)} \frac{\mathrm ds\,\mathrm dt}{st}\\
&= \frac {8^{n-1}}{(2\pi i)^2} \iint\limits_{|s|,|t|\le \delta \sqrt n, \Im(s),\Im(t)> 0}
e^{- \frac{s^2}4 - \frac{t^2}4 }
\left( 1 + i \frac{s^3+t^3}{4\sqrt n}-i \frac{s+t}{\sqrt n}+\frac{15s^4+15t^4}{32n}
\right.
\\&
\left.
-\frac{7s^2+7t^2}{4n}
+\frac{s^2t^2}{48n}
-\frac{s^6+t^6}{32n}
+\frac{s^3t+st^3}{4n}
-\frac{s^3t^3}{16n}
-\frac{st}n
\right)
\frac{\mathrm ds\,\mathrm dt}{st} + O\left( \frac{8^n}{n^{3/2}} \right) \\
&= \frac {8^{n-1}}{(2\pi i)^2} \iint\limits_{-\infty <s,t< \infty, \Im(s),\Im(t)> 0}
e^{-\frac{s^2}4 - \frac{t^2}4 }
\left( 1 + i \frac{s^3+t^3}{4\sqrt n}-i \frac{s+t}{\sqrt n}+\frac{15s^4+15t^4}{32n}
\right.
\\
&
\left.
-\frac{7s^2+7t^2}{4n}
+\frac{s^2t^2}{48n}
-\frac{s^6+t^6}{32n}
+\frac{s^3t+st^3}{4n}
-\frac{s^3t^3}{16n}
-\frac{st}n
\right)
\frac{\mathrm ds\,\mathrm dt}{st} + O\left( \frac{8^n}{n^{3/2}} \right).
\end{align*}
Finally by writing this as a sum of products of integrals and applying Lemma~\ref{lem:integral} term by term this expression equals
\begin{align*}
&= 8^{n-1}
\left(\frac 14+\frac 1{2\sqrt{\pi n}}+\frac 1{4\pi n}+O(n^{-3/2})\right).
\end{align*}
Summing up we arrive at the asymptotics
\[
\frac 1{8^n}[x^{n-1} y^n z^n]\, F(x,y,z)
=\frac 1{32} + \frac 1{16\sqrt{\pi n}} + \frac 1{32\pi n}+O(n^{-3/2}),
\]
which implies the first part of Proposition~\ref{prp:diag_asymp} after a shift of the index $n$.

In order to prove the second part, we only have to replace $P_n(s,t)$ by 
\[
\tilde P_n(s,t) = \left.\frac{-G(f(y,z),y,z)}{H_x(f(y,z),y,z)}\right|_{y = 1 + i s/{\sqrt n},\, z = 1 + i t/{\sqrt n} }
\]
and adjust the asymptotic expansions. We obtain
\[\frac 1{8^n}[x^{n-1} y^{n-1} z^{n-1}]F(x,y,z)
=
\frac 1{32}
-\frac 1{16\sqrt{\pi n}}
+\frac 1{32\pi n}+O(n^{-3/2}),
\]
which implies the second part.
\begin{remark}
By extending the above calculations further, which is only a computational issue and which does not necessitate any new ideas, we can obtain more terms of the asymptotic expansion of the second moment.
For instance, by considering Taylor approximation of degree $6$ of the implicit function $f$, we obtain the more precise statement
\begin{equation}\label{eqn:more_precisely}
\frac 1{2^\lambda}
\sum_{2^\lambda\leq t<2^{\lambda+1}}c_t^2
= \frac 1{4} + \frac 1{2\sqrt{\pi n}} + \frac 1{4\pi n}
+\frac{15}{16\sqrt{\pi} n^{3/2}}
+\frac{89}{72\pi n^2} + O(n^{-5/2})
\end{equation}
and also
\begin{equation}\label{eqn:more_precisely_lower}
\frac 1{2^\lambda}
\sum_{2^\lambda\leq t<2^{\lambda+1}}\tilde c_t^2
=\frac 1{4}
- \frac 1{2\sqrt{\pi n}}
+ \frac 1{4\pi n}
+\frac{49}{16\sqrt{\pi} n^{3/2}}
-\frac{199}{72\pi n^2} + O(n^{-5/2}).
\end{equation}
\end{remark}
\subsection{Completing the proof of Theorem~\ref{thm:main}}
\begin{corollary}\label{cor:43}
Let $X_\lambda$ (resp. $\tilde X_\lambda$) be the discrete random variable defined by $X_\lambda(t)=c_t$ (resp. $\tilde X_\lambda(t)=\tilde c_t$), where
$t\in [2^\lambda,2^{\lambda+1})$,
and let $\sigma_\lambda = \sqrt{\mathbb{E}(X_\lambda - \mathbb{E}X_\lambda)^2}$ and $\tilde\sigma_\lambda = \sqrt{\mathbb{E}\bigl(\tilde X_\lambda - \mathbb{E} \tilde X_\lambda\bigr)^2}$ be the corresponding standard deviations.
Then for $\lambda\ra\infty$ we have
\[
\sigma_\lambda
\sim
\frac{\sqrt{43}}{12\sqrt{\pi}}\lambda^{-1}\quad\mbox{and}\quad 
\tilde \sigma_\lambda
\sim
\frac{\sqrt{43}}{12\sqrt{\pi}}\lambda^{-1}.
\]
\end{corollary}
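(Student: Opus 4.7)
The plan is to apply the variance identity $\sigma_\lambda^2=\mathbb E X_\lambda^2-(\mathbb E X_\lambda)^2$ and compute each piece via the asymptotic expansions already available. Since $\mathbb EX_\lambda=m_\lambda$, Proposition~\ref{thm:ct_mean_value} supplies $\mathbb E X_\lambda$, and the refined second-moment formula~\eqref{eqn:more_precisely} supplies $\mathbb E X_\lambda^2$ through order $\lambda^{-2}$. The corollary then reduces to one algebraic subtraction; the subtlety is that one must carry both expansions sufficiently far.

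First I would square Proposition~\ref{thm:ct_mean_value}. Writing $m_\lambda=\frac{1}{2}+a\lambda^{-1/2}+b\lambda^{-3/2}+O(\lambda^{-5/2})$ with $a=\frac{1}{2\sqrt{\pi}}$ and $b=\frac{15}{16\sqrt{\pi}}$, the square expands to
\[
 m_\lambda^2 = \frac{1}{4} + a\lambda^{-1/2} + a^2\lambda^{-1} + b\lambda^{-3/2} + 2ab\,\lambda^{-2} + O(\lambda^{-5/2}) = \frac{1}{4}+\frac{1}{2\sqrt{\pi\lambda}}+\frac{1}{4\pi\lambda}+\frac{15}{16\sqrt{\pi}\,\lambda^{3/2}}+\frac{15}{16\pi\lambda^2}+O(\lambda^{-5/2}).
\]
Comparing with~\eqref{eqn:more_precisely}, the first four terms coincide exactly, so subtracting gives
\[
 \sigma_\lambda^2 = \frac{1}{\pi}\left(\frac{89}{72}-\frac{15}{16}\right)\lambda^{-2} + O(\lambda^{-5/2}) = \frac{43}{144\pi}\,\lambda^{-2}+O(\lambda^{-5/2}),
\]
since $89\cdot 2 - 15\cdot 9 = 178-135 = 43$. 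Taking square roots yields $\sigma_\lambda\sim\frac{\sqrt{43}}{12\sqrt{\pi}}\,\lambda^{-1}$.

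For $\tilde\sigma_\lambda$ I would repeat the same computation, now using the expansion of $\tilde m_\lambda$ from Proposition~\ref{thm:ct_mean_value} together with~\eqref{eqn:more_precisely_lower}. The relevant constants are $\tilde a=-\frac{1}{2\sqrt{\pi}}$ and $\tilde b=\frac{49}{16\sqrt{\pi}}$, whence $2\tilde a\tilde b=-\frac{49}{16\pi}$. Subtraction then produces a $\lambda^{-2}$-coefficient of $\frac{1}{\pi}\bigl(-\frac{199}{72}+\frac{49}{16}\bigr)=\frac{-398+441}{144\pi}=\frac{43}{144\pi}$, precisely the \emph{same} value as before, and the stated asymptotic $\tilde\sigma_\lambda\sim\frac{\sqrt{43}}{12\sqrt{\pi}}\,\lambda^{-1}$ follows.

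The main obstacle is really conceptual rather than technical: because $m_\lambda$ and $\tilde m_\lambda$ already deviate from $1/2$ by amounts of order $\lambda^{-1/2}$, the leading terms of the two moments are forced to coincide, and the variance first appears only at order $\lambda^{-2}$. Accessing \emph{that} term is exactly what forced the extension of the analysis in Section~4 by one further Taylor step of the implicit function $f$ (the degree-$6$ computation alluded to in the remark following Proposition~\ref{prp:diag_asymp}), so that~\eqref{eqn:more_precisely} and~\eqref{eqn:more_precisely_lower} become available. Once they are, the corollary follows by the straightforward bookkeeping above.
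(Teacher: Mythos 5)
Your proposal is correct and follows precisely the same route as the paper: apply $\sigma_\lambda^2=\mathbb E X_\lambda^2-(\mathbb E X_\lambda)^2$, square the expansion of $m_\lambda$ from Proposition~\ref{thm:ct_mean_value}, subtract from~\eqref{eqn:more_precisely}, observe that everything through order $\lambda^{-3/2}$ cancels, and read off $\frac{89}{72\pi}-\frac{15}{16\pi}=\frac{43}{144\pi}$ in the $\lambda^{-2}$ coefficient (and analogously $-\frac{199}{72\pi}+\frac{49}{16\pi}=\frac{43}{144\pi}$ for $\tilde\sigma_\lambda$). The arithmetic checks out and your remark about needing the degree-$6$ Taylor expansion of $f$ to reach the decisive $\lambda^{-2}$ term is exactly the point the paper flags in the remark preceding the corollary.
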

\begin{proof}
From~\eqref{eqn:second_moment} and~\eqref{eqn:more_precisely} we obtain
\begin{align*}
\frac 1{2^\lambda}
\sum_{2^\lambda\leq t<2^{\lambda+1}}
c_t^2
&=
\frac 1{8^\lambda}\left[x^\lambda y^{\lambda+1}z^{\lambda+1}\right]
F(x,y,z)
\\
&
=
\frac 14
+
\frac 1{2\sqrt{\pi}}\frac 1{\sqrt{\lambda}}
+
\frac 1{4\pi}\frac 1\lambda
+
\frac{15}{16\sqrt{\pi}}
\frac 1{\lambda^{3/2}}
+\frac{89}{72\pi}\frac 1{\lambda^2}
+
O(\lambda^{-5/2}).
\end{align*}
On the other hand Proposition~\ref{thm:ct_mean_value} implies
\[
\p{
\frac 1{2^\lambda}
\sum_{2^\lambda\leq t<2^{\lambda+1}}
c_t
}^2
=
\frac 14
+
\frac 1{2\sqrt{\pi}}\frac 1{\sqrt{\lambda}}
+
\frac 1{4\pi}\frac 1\lambda
+
\frac{15}{16\sqrt{\pi}}
\frac 1{\lambda^{3/2}}
+\frac{15}{16\pi}\frac 1{\lambda^2}
+
O(\lambda^{-5/2})
.
\]
A combination of these estimates yields the first statement.
The proof of the second statement, which uses~\eqref{eqn:second_moment_2},~\eqref{eqn:more_precisely_lower} and the second part of Proposition~\ref{thm:ct_mean_value}, is just as simple.
\end{proof}
Since the sequence of standard deviations converges to zero faster than the sequence of distances of the expected values from $1/2$, Chebyshev's inequality can be applied to yield the density $1$-result.
More precisely, let $\lambda_0$ be so large that $1/2<m_\lambda<1/2+\varepsilon/2$ for $\lambda\geq \lambda_0$.
Then
\begin{align*}
\mathbb P\left(\frac 12<X_\lambda<\frac 12+\varepsilon\right)
&\geq
\mathbb P\p{
\abs{X_\lambda-\mathbb E X_\lambda}<
\frac{\mathbb E X_\lambda-\tfrac 12}{\sigma_\lambda}
\sigma_\lambda
}
\\&\geq
1-\p{\frac{\mathbb E X_\lambda-\tfrac 12}{\sigma_\lambda}}^{-2}
\geq
1-\frac c\lambda
\end{align*}
for some constant $c> 0$.
Hence it follows that
\[
\mathbb P(X_\lambda\le 1/2 \ \vee\ X_\lambda\ge 1/2+\varepsilon) = O\left( \lambda^{-1} \right).
\]
Consequently, if $2^{\lambda} \le T < 2^{\lambda+1}$, we obtain
\[
|\{n < T : c_t \le 1/2 \ \vee \ c_t \ge 1/2+\varepsilon \}| = 
O\left( \frac{2^\lambda}\lambda + \frac{2^{\lambda-1}}{\lambda-1} + \cdots \right) = O\left( \frac T{\log T} \right).
\]
An analogous calculation for $\tilde X_\lambda$ completes the proof of Theorem~\ref{thm:main}.
\subsection{Approximation by a normal distribution}
By~\eqref{eqn:general_mean_value} we have
\[
m_{\lambda+1-j,\lambda}
=\frac 1{2^\lambda}\sum_{2^\lambda\leq t<2^{\lambda+1}} \delta(\lambda+1-j,t)
=\frac {2^{-j-1}}{4^\lambda}\sum_{s=0}^{j}\binom{2\lambda}s2^s
\]
for $j\geq 0$.
For each $\lambda\geq 0$ these values are the densities for a discrete probability distribution having a cumulative distribution function defined by
\begin{align*}
\ell\mapsto M_{\ell,\lambda}
&=\sum_{j=0}^{\ell}m_{\lambda+1-j,\lambda}
=
\frac 1{4^\lambda}\sum_{s=0}^\ell
\binom{2\lambda}{s}2^s
\sum_{j=s}^{\ell}
2^{-j-1}
\\&=
\frac 1{4^\lambda}\sum_{s=0}^\ell\binom{2\lambda}{s}
-
\frac {2^{-\ell-1}}{4^\lambda}\sum_{s=0}^\ell\binom{2\lambda}{s}2^{s}
\\&=
\frac 1{4^\lambda}\sum_{s=0}^\ell\binom{2\lambda}{s}
-
m_{\lambda+1-\ell,\lambda}.
\end{align*}
It is not difficult, using the estimate $\binom{2\lambda}\lambda\leq 4^\lambda/\sqrt{\pi\lambda}$, that the second summand converges to zero uniformly in $\ell$ as $\lambda\ra\infty$.
Therefore the sequence of probability distributions $((M_{\ell,\lambda})_\ell)_\lambda$ defines asymptotically a normal distribution, and we obtain for all $k\leq \lambda+1$
\begin{align*}
\sum_{i=k}^{\lambda+1}m_{i,\lambda}
=
M_{\lambda+1-k,\lambda}
&\sim
(1+o(1))
\frac 1{\sqrt{\lambda\pi}}
\int_{-\infty}^{\lambda+1-k}
\e^{-(x-\lambda)^2/\lambda}
\mathrm dx
\\&\sim
1
-
\frac 1{\sqrt{\lambda\pi}}
\int_{-\infty}^{k}
\e^{-x^2/\lambda}
\mathrm dx
\end{align*}
as $\lambda\ra\infty$, uniformly for $\abs{k}\leq R\sqrt{\lambda/2}$.

Moreover we want to study pointwise convergence of the probability densities.
Let $M\leq \lambda$ and assume that $\abs{\ell-\lambda}\leq M/2$.
We have
\begin{align*}
&\hskip -5em
\sum_{s=0}^{\ell+1}\binom{2\lambda}s 2^s
-
2^{\ell+2}
\binom{2\lambda}{\ell}
=
\sum_{s=0}^{\ell+1}
\left(\binom{2\lambda}{s}-2\binom{2\lambda}{s-1}+\binom{2\lambda}{s-2}\right)2^s
\\
&=
\sum_{s=0}^{\ell+1}
\left(
\binom{2\lambda+2}{s}
-4\binom{2\lambda}{s-1}
\right)
2^s
\\
&=
1+
\sum_{s=1}^{\ell+1}
\binom{2\lambda}{s-1}
\left(
\frac{(2\lambda+1)(2\lambda+2)}{(2\lambda-s+2)s}
-4
\right)
2^s
\\
&=
1+
2
\sum_{s=0}^{\ell}
\binom{2\lambda}{s}
2^s
\left(
\frac{4(\lambda-s)^2-2\lambda-2}{(2\lambda-s+1)(s+1)}
\right)
\\
&\ll
\lambda
\sum_{s=0}^{\lambda-M-1}
\binom{2\lambda}{s}
2^s
+
\sum_{s=\lambda-M}^{\ell+1}
\binom{2\lambda}{s}
\frac{\lambda+(\lambda-s)^2}{(\lambda-(s-\lambda))(\lambda+(s-\lambda))}
\\
&\ll
\left(
\lambda
2^{-M/2}
+
\frac{\lambda+M^2}{\lambda^2-M^2}
\right)
\sum_{s=0}^{\ell+1}
\binom{2\lambda}{s}
2^s.
\\
&\ll
\left(
\lambda
2^{-M/2}
+
\frac{M}{\lambda-M}
\right)
\sum_{s=0}^{\ell+1}
\binom{2\lambda}{s}
2^s,
\end{align*}
where the implied constants are absolute.
We obtain
\begin{align*}
m_{k,\lambda}
&=m_{\lambda+1-(\lambda+1-k),\lambda}
=
\frac{2^{-(\lambda+1-k)-1}}{4^\lambda}\sum_{s=0}^{\lambda+1-k}\binom{2\lambda}s 2^s
\\&=
\frac 1{4^\lambda}\binom{2\lambda}{\lambda-k}
+O\left(\left(\lambda 2^{-M/2}+\frac{M}{\lambda-M}\right)m_{k,\lambda}\right)
\end{align*}
for $\abs{k}\leq M/2$ and $M\leq \lambda$.
Moreover, the de Moivre--Laplace theorem yields for all $R\geq 0$
\[
\frac 1{4^\lambda}\binom{2\lambda}{\ell}
=
\frac 1{\sqrt{\pi\lambda}}\exp\left(-\frac{(\ell-\lambda)^2}{\lambda}\right)(1+o(1))
\]
as $\lambda\ra\infty$, uniformly for $-R\leq \frac{\ell-\lambda}{\sqrt{\lambda/2}}\leq R$.
For all $R\geq 0$ we get therefore
\[
m_{k,\lambda}
=
\frac 1{\sqrt{\pi\lambda}}
\exp\left(-\frac{k^2}{\lambda}\right)(1+o(1))
\]
uniformly for $\abs{k}\leq R\sqrt{\lambda/2}$, as $\lambda\ra\infty$.

In analogy to Corollary~\ref{cor:43}, concerning $c_t$ and $\tilde c_t$, we expect that for all $k\in\dZ$ the values $\sum_{\ell\geq k}\delta(k,t)$, where $2^\lambda\leq t<2^{\lambda+1}$, possess a standard deviation around $\lambda^{-1}$.
If this is the case, we could also ask for the probability distribution defined by individual columns $(\delta(k,t))_{k\in\dZ}$ and possibly show that for given $R\geq 0$ the number of 
$t\in[2^\lambda,2^{\lambda+1})$ 
such that for all $\abs{k}\leq R\sqrt{\lambda/2}$ the estimate
\[
\abs{
\sum_{\ell\geq k}\delta(k,t)
-
\frac 1{\sqrt{\lambda\pi}}
\int_k^\infty
\e^{-x^2/\lambda}
\mathrm dx
}
=O\left(
\frac 1{\sqrt{\lambda}}
\right)
\]
is satisfied is $2^\lambda(1+O(\lambda^{-1/2}))$.
That is, loosely speaking, we ask whether the difference $s(n+t)-s(n)$ is usually normally distributed with mean zero and variance $\lambda/2$, where $2^\lambda\leq t<2^{\lambda+1}$.
We leave the rigorous treatment of this question open.
\section{Proof of Theorem~\ref{thm:concrete}}
\subsection{A generating function for {$c_t$} for special values of {$t$}}
We define integers $t_j$ and $u_j$ (the latter being auxiliary values) by $t_0=0$, $u_0=1$ and
\[  t_j=\p{(10)^{j-1}1}_2\quad\mbox{and}\quad u_j=\p{(10)^{j-1}11}_2  \]
for $j\geq 1$.
From the recurrence relation for $\delta$ we get for $j\geq 1$ the relations
\begin{align}
\delta(k,t_j)&=\frac 12\delta(k-1,t_{j-1})+\frac 12\delta(k+1,u_{j-1})
\quad\mbox{and}
\label{eqn:first_sequence_recurrence}\\
\delta(k,u_j)&=\frac 12\delta(k-1,t_j)+\frac 12\delta(k+1,u_{j-1}).
\label{eqn:second_sequence_recurrence}
\end{align}
We introduce the bivariate generating functions
\begin{align*}
A(x,y)&=
\sum_{j\geq 0,k\geq 0}
x^jy^k
\delta(j-k,t_j)
\quad\mbox{and}\\
B(x,y)&=
\sum_{j\geq 0,k\geq 0}
x^jy^k
\delta(j+1-k,u_j)
\end{align*}
(capturing all nonzero values of $\delta(k,t_j)$ and $\delta(k,u_j)$)
and want to derive a representation of $A$ as a rational function.
For brevity, we set
\[  X = \sum_{k\geq 0}y^k\delta(1-k,1) = \frac 1{2-y}.  \]
The relations ~\eqref{eqn:first_sequence_recurrence} and~\eqref{eqn:second_sequence_recurrence} carry over to identities for the generating functions $A$ and $B$ as follows.
We split the summation over $j$ at $j=1$ and obtain
\begin{align*}
A(x,y)
&=
\sum_{k\geq 0}y^k\delta(-k,0)
+
\sum_{j\geq 1,k\geq 0}x^jy^k
\delta(j-k,t_j)
\\&=
1
+\frac 12\sum_{j\geq 1,k\geq 0}x^jy^k
\delta(j-k-1,t_{j-1})
+
\frac 12\sum_{j\geq 1,k\geq 0}x^jy^k
\delta(j-k+1,u_{j-1})
\\&=
1
+
\frac x2 \sum_{j\geq 1,k\geq 0}
x^{j-1}y^k \delta(j-1-k,t_{j-1})
+
\frac 12
\sum_{j\geq 1}
x^jy^0
\delta(j+1,u_{j-1})
\\&\qquad+
\frac x2\sum_{j\geq 1,k\geq 1}x^{j-1}y^k
\delta(j-1+1-(k-1),u_{j-1})
\\&=
1
+
\frac x2 \sum_{j\geq 0,k\geq 0}
x^jy^k \delta(j-k,t_j)
+
\frac {xy}2\sum_{j\geq 0,k\geq 1}
x^jy^{k-1}
\delta(j+1-(k-1),u_j)
\\&=
1
+
\frac x2 A(x,y)
+
\frac {xy}2\sum_{j\geq 0,k\geq 0}x^jy^k
\delta(j+1-k,u_j)
\\&=
1
+\frac x2A(x,y)
+\frac {xy}2 B(x,y)
.
\end{align*}
The sum over $j\geq 1$ at $k=0$ equals zero, since $s(u_j)=j+1$ and $ \delta(k,t)=0$ for $k>s(t)$.
We obtain
\[
A(x,y)
=
\frac{\frac{xy}2B(x,y)+1}{1-\frac x2}
=
\frac{xy}{2-x}B(x,y)
+\frac 2{2-x}
.
\]
Similarly, we have
\begin{align*}
B(x,y)
&=
\sum_{k\geq 0}
y^k
\delta(1-k,1)
+
\frac 12\sum_{j\geq 1,k\geq 0}
x^jy^k\delta(j-k,t_j)
\\&\hskip 5em
+\frac 12\sum_{j\geq 1,k\geq 0}x^jy^k\delta(j-k+2,u_{j-1})
\\&=
X
+\frac 12A(x,y)
-\frac 12\sum_{k\geq 0}x^0y^k\delta(-k,0)
+\frac x2\sum_{j\geq 0,k\geq 0}x^jy^k\delta(j+1-(k-2),u_j)
\\
&=
X-\frac 12+\frac 12A(x,y)+\frac {xy^2}2B(x,y)
,
\end{align*}
therefore
\[
B(x,y)
=
\p{X-\frac 12+\frac 12 A(x,y)}
/
\p{1-\frac {xy^2}2}
=
\frac{2X-1}{2-xy^2}+\frac 1{2-xy^2}A(x,y)
.
\]
We insert this into the expression for $A$ and obtain after a short calculation
\[
A(x,y)
=
\frac 1{2-y}\cdot\frac{2xy^3-3xy^2-4y+8}{x^2y^2-2xy^2-2x+4}
.
\]

We have by construction
\[
\left[
x^jy^k
\right]
A(x,y)
=
\delta\p{j-k,t_j}
\]
for $j\geq 1$ and $k\geq 0$, moreover $\delta(k,t_j)=0$ for $k>j$, therefore
\begin{equation*}
\begin{aligned}
c_{t_j}
&=
\sum_{0\leq k\leq j}
\delta(j-k,t_j)
=
\sum_{0\leq k\leq j}
\left[
x^jy^k
\right]
A(x,y)
\\&=
\left[x^jy^j\right]
\frac{1}{(1-y)(2-y)}
\cdot\frac{2xy^3-3xy^2-4y+8}{x^2y^2-2xy^2-xy-2x+4}
.
\end{aligned}
\end{equation*}

It would be possible to handle this generating function in a similar way as our trivariate generating function.
However, we use a different approach that makes it also easier to derive explicit bounds.
We introduce the power series
\[
H(z)
=
\sum_{j\geq 0}
c_{t_j}z^j
,
\]
which is the main diagonal of the rational function
\[
\tilde A(x,y)
=
\frac{1}{(1-y)(2-y)}
\cdot\frac{2xy^3-3xy^2-4y+8}{x^2y^2-2xy^2-xy-2x+4}
.
\]


\subsection{The diagonal generating function}\label{sec:5.2}

We have already pointed out that the diagonal series of a multivariate rational
function series is always D-finite, i.e., it always satisfies a linear
differential equation with polynomial coefficients. According to
Furstenberg~\cite{F67}, the diagonal $H$ of a rational function in two
variables, such as $\tilde A(x,y)$, is even algebraic, i.e., it satisfies an
equation $p(t,H(t))=0$ for some nonzero polynomial~$p$.

It is a standard application of creative telescoping to construct a differential
operator that annihilates the diagonal series of the rational formal power
series~$\tilde A(x,y)$. To this end, consider the auxiliary function
$U(x,y)=\frac 1y \tilde A(y,x/y)$ and observe that $\res_y U(x,y)=[y^{-1}]
U(x,y)$ is precisely the diagonal series of~$\tilde A(x,y)$. Creative
telescoping finds an operator $P(x,D_x)\neq 0$ and a rational function $Q(x,y)$
such that
\[
  P(x,D_x)\cdot U(x,y) = D_y Q(x,y).
\]
Using Koutschan's package~\cite{Ko2010} we obtain an operator $P$ of
order~3 with polynomial coefficients of degree~15. This operator and the
corresponding certificate $Q$ are also available on our website.

Now take $\res_y$ on both sides. Since $P$ does not involve $y$, it commutes
with~$\res_y$. Furthermore, the residue of the derivative of any series is
zero. It follows that $P(x,D_x)\cdot\res_y U(x,y)=0$, i.e., $P$ is an
annihilating operator of the diagonal series of $\tilde A(x,y)$.

To find the algebraic expression for the diagonal of $\tilde A(x,y)$, we can first use
guessing to obtain a candidate for the minimal polynomial. This yields
\[
-2z^3 + (2z^5-3z^4-8z^3-7z^2+32z-16)Z + (z^6-5z^5-3z^4+5z^3+30z^2-44z+16)Z^2.
\]
To prove that this guess is correct, it suffices to check that (a)~this
polynomial does indeed have a formal power series root, (b)~that this root is
also annihilated by the operator~$P$, for instance by writing the series as an
expression involving a square root, applying $P$ to that expression and
simplifying the result to zero, and (c)~check that the first three terms of the
diagonal series agree with the first three terms of the power series root of the
guessed polynomial---then, since they both are solutions of the third order
operator~$P$, they must be identical and thus the guessed minimal polynomial is
proved correct. It is an easy matter to execute these steps by a computer
algebra system.

For further information about computing diagonals by computer algebra, see~\cite{BCHP2011,BDS2015}. 


The asymptotic behaviour of the coefficients of $H$ can be analyzed using
singularity analysis (see Flajolet and Odlyzko \cite{FO90}
and Flajolet and Sedgewick \cite{FS2009}).

We have
\begin{multline*}
H(z)=-\frac 1{2(z-1)}-\frac{z}{2(z^2-6z+4)}
+
\sqrt{(z-1)(z-4)(z^2+3z+4)}
\\
\times
\p{
\frac z{16(z^2+3z+4)}
+
\frac 1{12(z^2+3z+4)}
+
\frac 1{6(z^2-6z+4)}
-
\frac 1{16(z-1)}
}
.
\end{multline*}
In order to apply singularity analysis it is necessary to determine the singularities of $H(z)$.
For example, $z=1$ is a polar singularity as well as a singularity which appears as $1/\sqrt{1-z}$ and as $\sqrt{1-z}$.
The root $3-\sqrt{5}$ which is the (smaller) root of $z^2-6z+4$ is a removable polar singularity so that it does not contribute. 
The other singularities ($z=4$, $z= 3+\sqrt{5}$, and $z = (3\pm i\sqrt 7)/2$) have modulus larger than $1$, which implies that $z=1$ is the dominant singularity.
The term $-1/2(z-1)$ contributes the (constant) term $1/2$, and
in order to obtain the second term in the asymptotic expansion
it remains to determine the asymptotic behaviour of the coefficients of $-\sqrt{(z-1)(z-4)(z^2+3z+4)}/(16(z-1))$.
In order to do this, we expand this term in the $\Delta$-region
\[
\Delta=\{z:\abs{z}<3/2,z\neq 1,\abs{\arg(z-1)}>\pi/8\}
,
\]
in which the function $H(z)$ is analytic, as follows: we have
\[
\frac{\sqrt{(z-1)(z-4)(z^2+3z+4)}}{z-1}
=
\frac {-c_1}{\sqrt{1-z}}
+
O(1)
\]
as $z\ra 1$, $z\in\Delta$,
where
\[
c_1=\sqrt{(4-z)(z^2+3z+4)}\Big\vert_{z=1}=2\sqrt{6}
.
\]
We apply Theorem VI.3 from \cite{FS2009} to the error term, moreover we use the asymptotic formula
\[
[z^j](1-z)^{-1/2}=\frac 1{\sqrt{\pi j}}+O(j^{-3/2})
\]
in order to conclude that
\[
[z^j]\frac{-\sqrt{(z-1)(z-4)(z^2+3z+4)}}{16(z-1)}
=\frac{\sqrt{3}}{4\sqrt{2\pi j}}+O(j^{-1}). 
\]
We obtain 
\[
[z^j]\, H(z) = c_{t_j}
= 
\frac 12+\frac {\sqrt{3}}{4\sqrt{2\pi j}} + O(j^{-1}).
\]
This shows that $c_{t_j}> 1/2$ for sufficiently large~$j$.
However, since $H(z)$ is completely explicit, and since the method of Flajolet and Odlyzko \cite{FO90} is effective and transfers error bounds explicitly, we can compute admissible constants for the above error terms.
After checking some initial values the proof of Theorem~\ref{thm:concrete} is complete.
\begin{remark}
We note that, for $j\geq 1$, the integer $t_j$ lies in the interval 
$[2^{2j-1},2^{2j})$, 
so that we could expect the corresponding value $c_{t_j}$ to be close to the expected value $m_{2j-1}$.
However, the asymptotics show that the quotients $\abs{c_{t_j}-1/2}/\abs{m_{2j-1}-1/2}$ approach $\sqrt{3}/2$, so that the sequence $(t_j)_j$ is in this sense not a ``typical'' sequence.
\end{remark}


\end{document}